\theoremstyle{plain}
\newtheorem{theorem}{Theorem}[section]
\newtheorem{lemma}[theorem]{Lemma}
\newtheorem{proposition}[theorem]{Proposition}
\newtheorem{sublemma}[theorem]{Sublemma}
\newtheorem*{question}{Question}
\tikzset{->-/.style={decoration={
  markings,
  mark=at position #1 with {\arrow{>}}},postaction={decorate}}}
\tikzset{every loop/.style={}}
\tikzset{circ/.style={shape=circle, inner sep=2pt, draw}}
\newcommand{\nc}{\newcommand}
\newcommand{\rnc}{\renewcommand}
\nc\bB{\mathbb{B}}
\nc\bC{\mathbb{C}}
\nc\bD{\mathbb{D}}
\nc\bE{\mathbb{E}}
\nc\bF{\mathbb{F}}
\nc\bG{\mathbb{G}}
\nc\bH{\mathbb{H}}
\nc\bI{\mathbb{I}}
\nc{\bJ}{\mathbb{J}}
\nc\bK{\mathbb{K}}
\nc\bL{\mathbb{L}}
\nc\bM{\mathbb{M}}
\nc\bN{\mathbb{N}}
\nc\bO{\mathbb{O}}
\nc\bP{\mathbb{P}}
\nc\bQ{\mathbb{Q}}
\nc\bR{\mathbb{R}}
\nc\bS{\mathbb{S}}
\nc\bT{\mathbb{T}}
\nc\bU{\mathbb{U}}
\nc\bV{\mathbb{V}}
\nc\bW{\mathbb{W}}
\nc\bY{\mathbb{Y}}
\nc\bX{\mathbb{X}}
\nc\bZ{\mathbb{Z}}
\nc\cA{\mathcal{A}}
\nc\cB{\mathcal{B}}
\nc\cC{\mathcal{C}}
\nc\cD{\mathcal{D}}
\nc\cE{\mathcal{E}}
\nc\cF{\mathcal{F}}
\nc\cG{\mathcal{G}}
\nc\cH{\mathcal{H}}
\nc\cI{\mathcal{I}}
\nc{\cJ}{\mathcal{J}}
\nc\cK{\mathcal{K}}
\nc\cL{\mathcal{L}}
\nc\cM{\mathcal{M}}
\nc\cN{\mathcal{N}}
\nc\cO{\mathcal{O}}
\nc\cP{\mathcal{P}}
\nc\cQ{\mathcal{Q}}
\nc\cS{\mathcal{S}}
\nc\cT{\mathcal{T}}
\nc\cU{\mathcal{U}}
\nc\cV{\mathcal{V}}
\nc\cW{\mathcal{W}}
\nc\cY{\mathcal{Y}}
\nc\cX{\mathcal{X}}
\nc\cZ{\mathcal{Z}}
\nc\ra{\rightarrow}
\nc\fC{\mathfrak{C}}
\nc\bfA{\mathbf{A}}
\nc\bfB{\mathbf{B}}
\nc\bfC{\mathbf{C}}
\nc\bfD{\mathbf{D}}
\rnc{\Col}{\mathrm{Col}}
\nc\For{\mathcal{F}}
\nc{\rank}{\mathrm{rank}}
\title{Algebraically primitive invariant subvarieties with quadratic field of definition}
\author{Paul Apisa and David Aulicino}
\date{}
\begin{document}

\maketitle

\begin{abstract}
We show that the only algebraically primitive invariant subvarieties of strata of translation surfaces with quadratic field of definition are the decagon, Weierstrass curves, and eigenform loci in genus two and the rank two example in the minimal stratum of genus four translation surfaces discovered by Eskin-McMullen-Mukamel-Wright.
\end{abstract}


\section{Introduction}

The $\mathrm{GL}(2, \bR)$ action on strata of translation surfaces have orbit closures that are linear orbifolds, called \emph{invariant subvarieties}, by work of Eskin-Mirzakhani \cite{EskinMirzakhaniInvariantMeas} and Eskin-Mirzakhani-Mohammadi \cite{EskinMirzakhaniMohammadiOrbitClosures}. These orbifolds are indeed varieties by work of Filip \cite{Filip2}. Since they are defined by linear equations in period coordinates on strata, the smallest field in which the coefficients of these linear equations may be taken is called the \emph{field of definition}. Given an invariant subvariety $\cM$, we let $k(\cM)$ be its field of definition. If $(X, \omega)$ is a translation surface in $\cM$, the tangent space of $\cM$ at $(X, \omega)$ is naturally identified with a subspace of $H^1(X, Z; \bC)$, where $Z$ is the set of cone points of $(X, \omega)$. The image of $T_{(X, \omega)} \cM$ in $H^1(X; \bC)$ under the map $p: H^1(X, Z; \bC) \ra H^1(X; \bC)$ that sends relative cohomology to absolute cohomology determines a subspace $V := p(T_{(X, \omega)} \cM)$ that, by work of Avila-Eskin-M\"oller \cite{AvilaEskinMollerForniBundle}, is symplectic. The \emph{rank} of $\cM$ is half the dimension of $V$ and the \emph{rel} of $\cM$ is $\dim \ker p \vert_{T_{(X, \omega)} \cM}$. The rank, rel, and field of definition were originally defined by Wright \cite{WrightCylDef, WrightFieldofDef} who showed that $k(\cM)$ is a totally real finite extension of $\bQ$ and that $V$ has a basis in $H^1(X; k(\cM))$. This implies that $V^\sigma$ is well-defined for each field embedding $\sigma: k(\cM) \ra \bR$. Wright showed that $\bigoplus_\sigma V^\sigma \subseteq H^1(X)$, where the direct sum is taken over all field embeddings of $k(\cM)$ in $\bR$. 
When equality holds, $\cM$ is called \emph{algebraically primitive}. An equivalent formulation is that $[k(\cM): \bQ] \cdot \mathrm{rank}(\cM) = g$, where $g$ is the genus of the surfaces in $\cM$. \footnote{When $\cM$ is the orbit of a Veech surface, $\mathrm{rank}(\cM) = 1$ and $k(\cM)$ is the trace field. So, for Veech surfaces, an equivalent definition of algebraic primitivity is that the trace field has degree $g$ over $\bQ$.  This definition coincides with the one given in M\"oller \cite{Moller-PeriodicPoints}.}

Calta \cite{CaltaVeechSurfsCompPerGen2} and McMullen \cite{McMullenBilliardsAndTeichCurves} discovered infinitely many algebraically primitive invariant subvarieties in genus two. McMullen \cite{McMullenDiscriminantSpin, McMullenDecagonUnique, McMullenGenus2} then classified all algebraically primitive orbit closures in genus two. A description of these orbit closures can be found in McMullen \cite{McMullen-BilliardsTeichCurveSurvey}. More recently, Eskin-McMullen-Mukamel-Wright \cite{EskinMcMullenMukamelWright} discovered an algebraically primitive invariant subvariety in the minimal stratum in genus four, which we call the \emph{$(1,1,1,7)$-locus}. The name is apt since this orbit closure can be described as the smallest invariant subvariety containing all unfoldings of $(1,1,1,7)$ quadrilaterals. Our main result is the following. 


\begin{theorem}\label{T:main}
If $\cM$ is an algebraically primitive invariant subvariety with quadratic field of definition, then, up to forgetting marked points, $\cM$ is one of the following: the decagon locus, an eigenform locus of genus two surfaces in $\cH(1,1)$, a Weierstrass curve in $\cH(2)$, or the $(1,1,1,7)$-locus. 
\end{theorem}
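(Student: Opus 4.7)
The plan is to exploit the defining identity of algebraic primitivity: since $[k(\cM):\bQ]\cdot\mathrm{rank}(\cM)=g$ and $[k(\cM):\bQ]=2$ by hypothesis, the genus satisfies $g = 2\,\mathrm{rank}(\cM)$. I would split the argument according to the rank of $\cM$, which is either one (giving $g=2$) or at least two (giving $g\geq 4$).

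For $\mathrm{rank}(\cM)=1$ and $g=2$, I would appeal directly to McMullen's complete classification of algebraically primitive invariant subvarieties in genus two, which produces exactly the decagon locus, the Weierstrass curves in $\cH(2)$, and the eigenform loci in $\cH(1,1)$. A brief bookkeeping step handles strata with marked points: since the forgetful map preserves rank, genus, and field of definition, and sends algebraically primitive subvarieties to algebraically primitive ones, any such $\cM$ in a stratum with marked points must descend to one of the three listed genus two loci.

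The substantive case is $\mathrm{rank}(\cM)\geq 2$, where $g\geq 4$ and the goal is to show that $\cM$ is the $(1,1,1,7)$-locus. My approach would be to degenerate $\cM$ toward the boundary of the stratum using Wright's cylinder deformation theorem together with Mirzakhani--Wright's boundary analysis, producing lower-dimensional invariant subvarieties whose fields of definition are contained in $k(\cM)$, hence are $\bQ$ or the given quadratic field. Algebraic primitivity is extremely restrictive: it rules out cover constructions (which would force $k(\cM)$ to contain the field of definition of the base), and combined with the rank two classification results of Apisa--Wright and the full rank theorem of Mirzakhani--Wright, it should narrow the possible ambient strata substantially. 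One then runs a boundary induction, using the rank one, genus two case as the base.

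The principal obstacle is twofold. First, one must rule out hypothetical rank two algebraically primitive examples in genus four other than the $(1,1,1,7)$-locus by a combinatorial analysis of cylinder decompositions compatible with real multiplication by a quadratic order, matching these against the signature constraints of genus four strata. Second, one must exclude rank $\geq 3$ examples entirely, for which the genus is at least six; here the full rank theorem combined with algebraic primitivity and the scarcity of compatible boundary degenerations should yield a contradiction, but verifying this across all high genus strata requires care. I expect the hardest single step to be carrying out the boundary induction so that the quadratic field of definition is preserved under degeneration and matches against the allowed rank one limits classified by McMullen, since this is where the combinatorial rigidity of the known $(1,1,1,7)$ example must be isolated.
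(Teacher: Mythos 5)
Your high-level strategy---handle rank one via McMullen's genus two classification, then run a boundary induction for rank at least two---does match the paper's overall architecture, but the induction as you describe it has a genuine gap: its base case cannot be the rank one, genus two classification. When one degenerates a rank two, genus four, algebraically primitive locus along a generic equivalence class not involved with rel, the boundary surface is a genus two surface \emph{with marked points}, and possibly \emph{disconnected} (landing in a product of two genus one or genus two strata). The actual base cases of the induction are the rank two rel zero and rank two rel one classifications (Theorems \ref{T:main-tool} and \ref{T:Main-tool2}), which constitute the bulk of the work, and the essential intermediate tool you are missing is Proposition \ref{P:GenusTwoCylinderRigid}: the only nonarithmetic cylinder rigid rank one rel one locus in genus two is the golden eigenform locus with exactly one golden point (in the sense of Kumar--Mukamel) marked, together with the explicit cylinder configuration of Figure \ref{F:GoldenCylConfig}. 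Without pinning down exactly which periodic points can appear as marked points on these degenerations (this uses Apisa's classification of periodic points in genus two and M\"oller's result for Teichm\"uller curves), the ``combinatorial analysis of cylinder decompositions'' you defer to cannot get off the ground; it is by regluing simple cylinders and golden-configuration cylinders into these genus two boundary surfaces that the $(1,1,1,7)$-locus is actually isolated.

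A second omission is the control of disconnected and marked-point-carrying degenerations, which is what makes the induction run at all. The paper shows the genus drops by exactly the degree $d$ when the collapsed class is not involved with rel (Proposition \ref{CylinderSpan:Prop}, via real multiplication acting on the span of the vanishing cycles), bounds the number of boundary components by $d$ (Lemma \ref{lemma:ConnCompsDeg}), and proves that one can always choose a typical degeneration yielding connected surfaces with no free marked points (Proposition \ref{P:induction}). With these in hand, the exclusion of rank at least three---which you flag as requiring ``care across all high genus strata''---is in fact a short induction on dimension: every such locus degenerates to a rank two rel one algebraically primitive locus of connected genus four surfaces without free marked points, which Theorem \ref{T:Main-tool2} forbids. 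So the outline points in the right direction, but the two steps you identify as obstacles are precisely where the new mathematics lives, and the proposal does not supply the ideas needed to overcome them.
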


This result should be compared to Mirzakhani-Wright \cite{MirzakhaniWrightFullRank} which classified \emph{full rank} invariant subvarieties, i.e., algebraically primitive invariant subvarieties with rational field of definition, as components of strata and hyperelliptic loci up to ignoring marked points. Up to ignoring marked points, the only known algebraically primitive orbit closures aside from full rank ones and those mentioned in Theorem \ref{T:main} are:
\begin{itemize}
    \item Veech's double regular $p$-gon surface, where $p$ is prime,
    \item Veech's regular $m$-gon surface, where $m$ is twice a prime or a power of two, and
    \item two sporadic Teichm\"uller curves in genus three and four found by Kenyon-Smillie \cite{KenyonSmillie} and Vorobets \cite{Vorobets-Sporadic}.
\end{itemize}
  See Wright \cite[Theorem 1.7]{WrightSchwarzVWBMCurves} for a proof that this list includes all of the algebraically primitive Bouw-M\"oller curves; see Veech \cite{VeechTeichCurvEisen} for a construction of the first two examples; see Leininger \cite{Leininger-Sporadic} and McMullen \cite[Section 6]{McMullen-BilliardsTeichCurveSurvey} for a construction of the sporadic Teichm\"uller curves.

Theorem \ref{T:main} may be contextualized in light of the main theorem of Apisa-Wright \cite{ApisaWrightHighRank}, which states that any invariant subvariety of genus $g$ surfaces whose rank is at least $\frac{g}{2}+1$ is a component of a stratum or a full locus of holonomy double covers of a component of a stratum of quadratic differentials. Given this result, the highest rank invariant subvarieties that one could hope to find, when $g$ is even, excluding loci of covers, would have rank $\frac{g}{2}$.  Such subvarieties could either have rational or quadratic field of definition. Theorem \ref{T:main} classifies the examples in the latter case. Moreover, the theorem provides some evidence for a negative resolution of the question posed in Apisa-Wright \cite[Question~1.3]{ApisaWrightHighRank}, as to whether there exists a rank three invariant subvariety which does not arise as a locus of branched covers of smaller genus surfaces. 

The proof of Theorem \ref{T:main} relies heavily on the recent work of Apisa and Wright \cite{ApisaWrightDiamonds, ApisaWrightGeminal, ApisaWrightHighRank}. We anticipate that these techniques will continue to be useful for classifying invariant subvarieties. In contrast with the original application of these techniques in \cite{ApisaWrightHighRank}, the present work applies these techniques to study non-arithmetic invariant subvarieties.

\vspace{.25cm}

\noindent \textbf{Outline of the Proof of Theorem \ref{T:main} and Future Directions.} \text{}

\vspace{.25cm}

\noindent After proving general results about the boundary of algebraically primitive invariant subvarieties in Section \ref{S:Boundary}, we will explain in Section \ref{S:ProofMainTheorem} how to deduce Theorem \ref{T:main} from the established classification results in genus two and the following two special cases:

\begin{theorem}\label{T:main-tool}
The only algebraically primitive rank two rel zero invariant subvariety with quadratic field of definition is the $(1,1,1,7)$-locus.  In particular, the $(1,1,1,7)$-locus has no periodic points. 
\end{theorem}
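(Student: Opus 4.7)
\noindent \textbf{Proof plan for Theorem \ref{T:main-tool}.}
The plan is to combine the general boundary results of Section~\ref{S:Boundary} with the cylinder deformation and diamond machinery of~\cite{ApisaWrightDiamonds, ApisaWrightGeminal, ApisaWrightHighRank} in order to narrow $\cM$ down to finitely many candidates and then to identify the unique one as the $(1,1,1,7)$-locus. Since $\cM$ has rank two, rel zero, and quadratic field of definition, any surface in $\cM$ has genus $[k(\cM):\bQ] \cdot \mathrm{rank}(\cM) = 4$, and $\cM$ has complex dimension $2\cdot 2 + 0 = 4$. Algebraic primitivity yields a symplectic direct-sum decomposition $H^1(X;\bR) = V \oplus V^\sigma$, where $\sigma$ is the nontrivial embedding of $k(\cM)$ into $\bR$; the rel zero condition additionally forces every local tangent direction of $\cM$ to be detected in absolute cohomology.

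The first step is a boundary analysis. By the results of Section~\ref{S:Boundary}, each boundary component $\cM'$ of $\cM$ is an invariant subvariety with $k(\cM') \subseteq k(\cM)$ and an inherited algebraic primitivity property. Since $[k(\cM):\bQ] = 2$, each such $\cM'$ either has rational field of definition, in which case it is full rank in its own stratum and so, up to forgetting marked points, is a component of a stratum or a hyperelliptic locus by~\cite{MirzakhaniWrightFullRank}; or it has quadratic field of definition, in which case rank and dimension counts force $\cM'$ to be a rank one algebraically primitive Teichm\"uller curve with quadratic trace field. By McMullen's classification~\cite{McMullenGenus2, McMullenDiscriminantSpin, McMullenDecagonUnique}, this latter option means $\cM'$ is a Weierstrass curve, an eigenform locus, or the decagon locus.

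The second step is to reconstruct $\cM$ from this short list of admissible boundary types. A generic surface $(X,\omega) \in \cM$ admits horizontal cylinder decompositions whose collapses realize the boundary components identified above, and the interplay between the symplectic splitting $V \oplus V^\sigma$ and the combinatorics of cylinder equivalence classes (in the sense of~\cite{WrightCylDef}) should rigidly determine both the ambient stratum of $\cM$ and the cylinder diagrams of its generic surfaces. Matching these cylinder diagrams with the known ones for unfoldings of $(1,1,1,7)$ quadrilaterals produces an explicit such unfolding inside $\cM$, which, by the definition of the $(1,1,1,7)$-locus as the smallest invariant subvariety containing those unfoldings, forces $\cM$ to equal the $(1,1,1,7)$-locus. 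The main obstacle is the stratum-by-stratum elimination: showing that in non-minimal genus four strata such as $\cH(4,2)$, $\cH(3,3)$, or $\cH(2,2,2)$, the admissible boundary configurations cannot be assembled into a rank two, rel zero, algebraically primitive invariant subvariety with quadratic field of definition, and that within $\cH(6)$ no non-arithmetic example besides the $(1,1,1,7)$-locus can arise.

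Finally, the periodic points claim is a formal corollary of the classification. Suppose $p$ were a periodic point on the $(1,1,1,7)$-locus $\cN$ distinct from the cone point. Marking $p$ yields an invariant subvariety $\cN^\ast \subseteq \cH(6,0)$ that finite-to-one covers $\cN$ via the forgetful map. A direct tangent-space computation using the forgetful map $H^1(X, Z \cup \{p\};\bC) \to H^1(X, Z;\bC)$ shows that $\cN^\ast$ still has rank two, rel zero, quadratic field of definition, and is algebraically primitive. By the first part of the theorem, $\cN^\ast$ must coincide with the $(1,1,1,7)$-locus, but that locus lies in $\cH(6)$ while $\cN^\ast$ lies in the distinct stratum $\cH(6,0)$, a contradiction.
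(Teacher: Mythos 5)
There is a genuine gap: your proposal is a plan whose decisive steps are exactly the ones left unproved. First, the boundary analysis is off in a way that matters. A typical degeneration of a rank two, rel zero $\cM$ along a generic equivalence class not involved with rel is a codimension-one boundary component, so it has dimension three: by Lemma \ref{lemma:genusbytwo} it consists of genus two surfaces and is rank one \emph{rel one} (possibly with marked points), or, by Lemma \ref{lemma:ConnCompsDeg}, it may be disconnected with two genus one components. It is never a Teichm\"uller curve, so ``rank and dimension counts force $\cM'$ to be a rank one algebraically primitive Teichm\"uller curve'' is false, and McMullen's genus two classification alone does not suffice. The paper needs the finer Proposition \ref{P:GenusTwoCylinderRigid}: because $\cM$ is rel zero, the boundary is cylinder rigid, and cylinder rigidity together with Apisa's classification of periodic points in genus two and the Kumar--Mukamel golden point forces the connected boundary to be the discriminant $5$ (golden) eigenform locus \emph{with the golden point marked}, and pins down its cylinder configurations (Figure \ref{F:GoldenCylConfig}). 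Your list of admissible boundaries has no mechanism for determining the discriminant or detecting the forced marked point; a priori any real quadratic field could occur, and without fixing $\bQ(\sqrt5)$ and the marked golden point your ``reconstruction'' step cannot single out the $(1,1,1,7)$-locus. (Also note the field of definition is preserved exactly under degeneration, so the rational-field branch of your dichotomy does not arise.)

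Second, the step you call ``the main obstacle'' -- assembling the admissible boundary data back into $\cM$ and eliminating all other possibilities -- is precisely the content of the paper's Lemmas \ref{L:OneConnected}, \ref{L:DisconnectedRank2Rel0Case}, and \ref{L:ConnectedRank2Rel0Case}: one degenerates along two disjoint generic equivalence classes $\bfC_1$, $\bfC_2$, shows at least one degeneration is connected (hence the marked golden eigenform locus), handles the disconnected case by an explicit gluing that yields the $(1,1,1,7)$-locus, and rules out the doubly connected case by a delicate analysis of golden configurations, shears, and a degeneration killed by M\"oller's periodic point theorem. None of this is supplied or even sketched concretely in your proposal; saying the splitting $V \oplus V^\sigma$ ``should rigidly determine'' the stratum and cylinder diagrams is not an argument. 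Finally, your periodic point corollary is fine in spirit (and matches the paper's logic), but it requires the classification in the first part to be proved for subvarieties \emph{with marked points}, since $\cN^\ast$ lives in $\cH(6,0)$; your stratum-by-stratum plan enumerates only unmarked genus four strata, so as written the corollary does not follow from the classification you propose to prove.
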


\begin{theorem}\label{T:Main-tool2}
The only algebraically primitive rank two rel one invariant subvariety with quadratic field of definition is the $(1,1,1,7)$-locus with a free marked point.
\end{theorem}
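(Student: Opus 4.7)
Assume $\cM$ is algebraically primitive, rank two, rel one, with quadratic field of definition $k$; then the surfaces in $\cM$ have genus four. The plan is to identify $\cM$ as the $(1,1,1,7)$-locus with a free marked point, by reducing to Theorem \ref{T:main-tool}.

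First I would dispose of the case in which $\cM$ has a free marked point $P$. Forgetting $P$ then produces an invariant subvariety $\cM^\circ$ of rank two, rel zero, and field of definition $k$, since the single rel direction of $\cM$ is precisely the motion of $P$. Theorem \ref{T:main-tool} identifies $\cM^\circ$ with the $(1,1,1,7)$-locus, so $\cM$ is that locus with a free marked point added, as claimed. Periodic marked points of $\cM$, if any, may be stripped without changing rank, rel, or field of definition, so I may assume $\cM$ has no extra marked points---equivalently, the single rel direction is realized by genuine relative motion among the zeros of the holomorphic $1$-form.

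To rule out this remaining case, I would deform along the rel direction until some saddle connection collapses and invoke the boundary results from Section \ref{S:Boundary}, together with the cylinder-deformation and diamond machinery of Apisa-Wright \cite{ApisaWrightDiamonds,ApisaWrightGeminal,ApisaWrightHighRank}, to produce a boundary invariant subvariety $\partial \cM$ on which rank, algebraic primitivity, and the quadratic field $k$ persist on an appropriate component. By Theorem \ref{T:main-tool}, that component must be the $(1,1,1,7)$-locus inside the minimal stratum $\cH(6)$. Reversing the degeneration exhibits $\cM$ as obtained from the $(1,1,1,7)$-locus by splitting its order-six zero, with a one-dimensional rel deformation among the resulting pieces. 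Since Theorem \ref{T:main-tool} also asserts that the $(1,1,1,7)$-locus has no periodic points, any such splitting must yield a \emph{free} marked point on un-collapse, contradicting the assumption that $\cM$ has no free marked point.

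The principal obstacle is ensuring that the boundary degeneration genuinely preserves the key invariants: algebraic primitivity, the quadratic field of definition $k$, and rank two on a single component of $\partial \cM$. A priori, the degeneration could produce multi-component boundaries, or force the field of definition to collapse to $\bQ$, neither of which permits direct application of Theorem \ref{T:main-tool}. Excluding these pathologies, and then enumerating precisely which rel-preserving splittings of the order-six zero can occur, will rely on the non-arithmetic boundary techniques developed in Section \ref{S:Boundary}.
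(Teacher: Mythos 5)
There is a genuine gap in the second half of your argument. Your reduction of the free-marked-point case to Theorem \ref{T:main-tool} is fine and matches the paper, so the whole content of the theorem is the case where $\cM$ has no marked points, i.e.\ lives in a non-minimal stratum of genus four (such as $\cH(3,3)$, $\cH(4,2)$, $\cH(5,1)$, or a stratum with more zeros) with the rel direction given by relative motion of the zeros. Your plan for this case is to degenerate to a rank two rel zero boundary, identify it with the $(1,1,1,7)$-locus via Theorem \ref{T:main-tool}, and then claim that ``reversing the degeneration'' forces a free marked point because the $(1,1,1,7)$-locus has no periodic points. That last step is where the argument fails: the absence of periodic points constrains which \emph{marked points} can be added to the $(1,1,1,7)$-locus, but it says nothing about whether the order-six zero can be \emph{split} into two or more genuine zeros with a one-dimensional rel deformation between them. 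Such a splitting produces no marked point at all, so there is no contradiction with ``no free marked point,'' and ruling out these splittings is precisely the hard content of the theorem --- it is not something the boundary identification hands you for free.

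The paper's actual proof is a substantial cylinder-combinatorics argument with no analogue in your sketch. It first proves (via two sublemmas) that no generic equivalence class of cylinders on a surface in $\cM$ has rel in its twist space; this uses typical degenerations to the $(1,1,1,7)$-locus, the classification of cylinder configurations in the golden eigenform locus (Proposition \ref{P:GenusTwoCylinderRigid}), the dichotomy of Lemmas \ref{L:DisconnectedRank2Rel0Case} and \ref{L:ConnectedRank2Rel0Case} (one of two disjoint generic equivalence classes must be in the golden configuration), and a final analysis of which pairs of saddle connections on a marked genus-two surface can collapse. It then concludes that $\cM$ is cylinder rigid, takes three disjoint generic equivalence classes, pins down the surface explicitly (Figure \ref{F:Example5}), and derives a contradiction with the golden-configuration dichotomy. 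Your proposal would need to supply an argument of comparable specificity to exclude the zero-splitting degenerations; as written, it assumes the conclusion at the ``un-collapse'' step. Separately, your proposed degeneration (flowing along rel until a saddle connection collapses) is not the mechanism the paper's boundary results are built for --- Section \ref{S:Boundary} and Proposition \ref{P:induction} work with typical degenerations of generic cylinder equivalence classes, which is what guarantees codimension one, preserved algebraic primitivity, and control over connectedness and marked points.
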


\noindent These results are shown in Sections \ref{S:Rk2Rel0} and \ref{S:Rk2Rel1}, respectively. Section \ref{S:CylinderRigidInGenusTwo} classifies cylinder rigid rank one rel one algebraically primitive subvarieties in genus two, which is crucial for proving Theorem \ref{T:main-tool}.

In principle, the only obstacle to classifying all algebraically primitive invariant subvarieties of rank at least two with the techniques developed in this paper is the following.

\begin{question}
Is it possible to classify all cylinder rigid rank one rel one algebraically primitive subvarieties? As a particularly interesting special case, can these be classified in genus three?
\end{question}

\noindent A resolution to the special case could lead to a classification of algebraically primitive invariant subvarieties of rank at least two with cubic field of definition. By work of Apisa \cite{Apisa-RankOneHyp} and Ygouf \cite{Ygouf-RankOneGenusThree}, ignoring marked points, cylinder rigid rank one rel one algebraically primitive subvarieties in genus three only exist in the principal stratum or the stratum with exactly three zeros.

\subsubsection*{Connection to previous work.} Algebraically primitive Teichm\"uller curves in genus three were studied extensively by Bainbridge-M\"oller \cite{BainbridgeMoller} and Bainbridge-Habegger-M\"oller \cite{BainbridgeHabeggerMollerHNFilt}, where an effective finiteness result was shown. They were classified in the hyperelliptic component of $\cH(2,2)$ by Winsor \cite{Winsor-H22}. Finiteness of algebraically primitive orbit closures was established for genus greater than two by Eskin-Filip-Wright \cite{EskinFilipWrightAlgHull} and in certain special cases in earlier work of Matheus-Wright \cite{MatheusWrightHodgeTeichPlanes}, Lanneau-Nguyen-Wright \cite{LanneauNguyenWrightFinInNonArithRkOne}, and M\"oller \cite{Moller-TeichCurvesInHyp}.  Filip \cite{Filip-Survy} described a connection between the notion of algebraic primitivity and other natural invariants of invariant subvarieties. Finally, characteristic numbers of the algebraic surfaces formed by the restriction of the tautological curve to an algebraically primitive Teichm\"uller curve were determined for all known examples by Freedman-Lucas \cite{Freedman-Lucas}.

\subsubsection*{Acknowledgements.} The authors are grateful to Alex Wright for helpful conversations. P.A. was partially supported by NSF Grant DMS Award No. 2304840.  D.A. was partially supported by the National Science Foundation under Award Nos. DMS - 1738381, DMS - 1600360, Simons Foundation: Collaboration Grant under Award No. 853471, and several PSC-CUNY Grants.

\section{The boundary of algebraically primitive invariant subvarieties}\label{S:Boundary}

The results below heavily use the techniques developed in Apisa-Wright \cite{ApisaWrightDiamonds, ApisaWrightGeminal, ApisaWrightHighRank}. We will also import the notation from those papers. We will briefly recapitulate the main definitions and notation here. We denote by $(X, \omega)$ a \emph{translation surface} consisting of a Riemann surface $X$ together with a holomorphic $1$-form $\omega$. In the sequel, $\cM$ will always be an invariant subvariety. Let $(X, \omega) \in \cM$. An \emph{equivalence class of cylinders on $(X, \omega)$} is a maximal collection of parallel cylinders that remain parallel to one another at all surfaces in a small neighborhood of $(X, \omega)$ in $\cM$. An equivalence class $\bfC$ is called \emph{generic} if the boundary saddle connections of the cylinders in $\bfC$ remain parallel to the cylinders in $\bfC$ on all nearby surfaces in $\cM$. The space of all linear combinations of Poincar\'e-duals of core curves of cylinders in $\bfC$ that belong to $T_{(X, \omega)}\cM$ is called the \emph{twist space $\mathrm{Twist}(\bfC)$ of $\bfC$}. The cylinder deformation theorem of Wright \cite{WrightCylDef} states that the \emph{standard shear $\sigma_{\bfC}$}, which is the linear combination of cylinder core curves whose coefficients are the heights of the corresponding cylinder, belongs to the twist space. The equivalence class $\bfC$ is said to be \emph{involved with rel} if $\mathrm{Twist}(\bfC)$ contains a rel vector, i.e., an element of $\ker p \vert_{T_{(X, \omega)}} \cM$. Given $v \in \mathrm{Twist}(\bfC)$, the cylinders $\bfC_v$ are those whose heights vanish first along the path $(X, \omega) + tv$ parameterized by $t \geq 0$. We say that $v$ is \emph{typical} if $\bfC$ is generic and the cylinders in $\bfC_v$ have constant ratios of heights for all perturbations of $(X, \omega)$ in $\cM$. Finally, given an element of the twist space $v$, we can use it to deform the surface $(X, \omega)$ to a boundary surface $\Col_v(X,\omega)$, which belongs to a boundary invariant subvariety $\cM_v$ of $\cM$, by collapsing a saddle connection. When $v = \sigma_\bfC$, we simply write $\cM_{\bfC}$. By Apisa-Wright \cite[Lemma~11.2]{ApisaWrightHighRank}, if $\bfC$ is generic, then it is always possible to find a typical element of $\mathrm{Twist}(\bfC)$, which implies that $\dim(\cM_v)= \dim(\cM) - 1$. Moreover, if $\bfC$ is involved with rel, then $\mathrm{rank}(\cM_v) = \mathrm{rank}(\cM)$.

\begin{proposition}\label{CylinderSpan:Prop}
Suppose that $\bfC$ is a generic equivalence class of cylinders on a surface $(X, \omega)$ in a degree $d$ algebraically primitive invariant subvariety $\cM$. Suppose too that $\bfC$ is not involved with rel and that its closure is not $(X, \omega)$. Then $\Col_{\bfC}(X, \omega)$ has genus at least $d$ less than the genus of $(X, \omega)$. 
\end{proposition}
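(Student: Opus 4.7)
The plan is to use algebraic primitivity together with the assumption that $\bfC$ is not involved with rel to bound below the dimension of the $\bQ$-span of the absolute homology classes of the core curves of $\bfC$, and then to convert that homological bound into a genus bound by a standard topological computation.

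Set $k = k(\cM)$, so $[k:\bQ] = d$, and decompose $H^1(X;\bR) = \bigoplus_{\tau} V^\tau$ over the $d$ real embeddings $\tau$ of $k$, with $V^{\mathrm{id}} = V$; this is a direct sum precisely by algebraic primitivity. Since $\bfC$ is not involved with rel, $p\vert_{\mathrm{Twist}(\bfC)}$ is injective, so its image $U \subset V$ is nonzero (the standard shear $\sigma_\bfC \in \mathrm{Twist}(\bfC)$ is nonzero because all heights are positive). By Wright's field of definition theorem, $\mathrm{Twist}(\bfC)$, hence $U$, is defined over $k$, so each Galois conjugate $U^\tau \subset V^\tau$ is nonzero of the same dimension as $U$.

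Let $c_1, \ldots, c_k$ be the core curves of the cylinders of $\bfC$ and let $W := \mathrm{span}_\bQ\{[c_i]^*\} \subset H^1(X;\bQ)$. Every element of $\mathrm{Twist}(\bfC)$ is an $\bR$-linear combination of the $[c_i]^*$, so $U \subset W_\bR := W \otimes_\bQ \bR$; because $W$ is $\bQ$-rational (hence Galois-invariant), the same inclusion passes to Galois conjugates: $U^\tau \subset W_\bR$ for every $\tau$. Thus $W_\bR \cap V^\tau \neq 0$ for each $\tau$, and the direct-sum decomposition of $H^1(X;\bR)$ forces $\dim_\bQ W \geq d$.

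Collapsing along $\sigma_\bfC$ simultaneously drives the heights of every cylinder in $\bfC$ to zero, so topologically $\Col_\bfC(X,\omega)$ is obtained from $(X,\omega)$ by pinching the $k$ disjoint simple closed curves $c_i$; the hypothesis that the closure of $\bfC$ is not all of $(X,\omega)$ guarantees that this yields a bona fide (possibly disconnected) translation surface rather than a degenerate graph. If the complement $(X,\omega) \setminus \bigcup c_i$ has $c$ components, a short Euler-characteristic count gives a total geometric genus drop of $k - c + 1$. On the other hand, the relations among $\{[c_i]^*\}$ in $H^1(X;\bQ)$ are generated by the $c - 1$ independent relations coming from boundaries of the $c$ complement components, so $\dim_\bQ W = k - c + 1$. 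Combining this with the previous paragraph, the genus drop equals $\dim_\bQ W \geq d$, which is the claim.

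The most delicate step is the topological identification in the last paragraph: verifying that the Apisa--Wright operation $\Col_\bfC$ really does correspond to simultaneous pinching of the $c_i$, and that the total geometric genus drop equals $\dim_\bQ W$. A secondary point that needs care is the implication that $k$-rationality of $U$ forces $U^\tau \subset W_\bR$ (not merely into some larger $\bR$-subspace of $H^1(X;\bR)$); this relies on the $[c_i]^*$ themselves being $\bQ$-rational, which ensures $U \cap H^1(X;k) \subset W \otimes_\bQ k$ and hence that Galois conjugation preserves membership in $W_\bR$.
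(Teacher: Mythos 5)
Your Galois argument in the second and third paragraphs is sound and is genuinely parallel in spirit to what the paper does: the paper also uses the direct sum decomposition $H^1(X)=\bigoplus_\sigma V^\sigma$ coming from algebraic primitivity to show that a certain $\bQ$-rational subspace must meet every summand, hence have dimension at least $d$. But you apply this to the wrong subspace, and the final paragraph contains the fatal error. The operation $\Col_{\bfC}$ is \emph{not} a Deligne--Mumford pinching of the core curves $c_i$; it drives the \emph{heights} of the cylinders to zero, identifying the top boundary of each cylinder with its bottom boundary. Under this degeneration the classes that die are the \emph{cross curves} (and differences of identified boundary saddle connections), not the core curves: e.g.\ for a simple cylinder the vanishing absolute cycle is the cross curve joining the zero to itself through the cylinder, while the core curve survives as the class of the identified boundary saddle connection. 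Consequently the genus drop is not $k-c+1=\dim_\bQ W$. A concrete failure: a cylinder whose two boundary components are single closed saddle connections based at \emph{distinct} zeros has non-separating core curve ($\dim_\bQ W=1$), yet collapsing its height merely merges the two zeros and leaves the genus unchanged (the unique vanishing cycle is the purely relative cross curve). Your computation would predict a drop of $1$. Note also that your genus formula uses none of the hypotheses (genericity, non-involvement with rel), whereas the paper must invoke Apisa--Wright's rel-scalability theorem just to conclude that the genus drops \emph{at all} -- a clear sign that the drop is not a soft topological consequence of the core curves being homologically nontrivial.

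The correct mechanism, which your proposal is missing, is the following. The genus of $\Col_{\bfC}(X,\omega)$ is half the symplectic rank of $p(\mathrm{Ann}(V))$, where $V$ is the space of \emph{vanishing cycles} (Chen--Wright, Apisa--Wright); so what one must produce is a $d$-dimensional \emph{isotropic} subspace of absolute vanishing cycles. The paper first produces a single nonzero absolute vanishing cycle $v$ (this uses the rank comparison of Apisa--Wright Lemma 3.8 together with the rel-scalability of $\Col_{\bfC}(\bfC)$ from their Theorem 5.1 -- this is where ``generic and not involved with rel'' enters), and then, instead of your Galois-conjugation of the twist space, uses Filip's real multiplication by an order $\cO\subseteq k(\cM)$ together with Bainbridge--M\"oller's result that the span of the absolute vanishing cycles is $\cO$-invariant: a Vandermonde argument shows all $d$ Galois components $v_{\sigma_i}$ of $v$ are nonzero, giving the required $d$-dimensional isotropic subspace inside $V$. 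Your paragraph establishing $\dim_\bQ W\ge d$ for the span $W$ of the core curves is a true statement, but it is not connected to the genus of $\Col_{\bfC}(X,\omega)$ by any argument you have given, and no such connection holds without the rel-scalability input.
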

\begin{proof}
Let $w$ be any cross curve of a cylinder in $\bfC$ that vanishes under the collapse. Since $\bfC$ is generic and not involved with rel, $w$ pairs trivially with elements of $\ker(p) \cap T_{(X, \omega)} \cM$. The rank of $\cM_{\bfC}$ is strictly less than that of $\cM$ by Apisa-Wright \cite[Lemma 3.8]{ApisaWrightHighRank}. By Apisa-Wright \cite[Theorem 5.1]{ApisaWrightHighRank}, $\Col_{\bfC}(\bfC)$ is rel-scalable. Therefore, no core curve of a cylinder in $\bfC$ can represent an absolute homology class after degenerating, and the genus of $\Col_\bfC(X, \omega)$ must be less than that of $(X, \omega)$. In particular, if $\cH$ is the stratum containing $(X, \omega)$, then the rank of $\cH_\bfC$ is less than that of $\cH$. By Apisa-Wright \cite[Lemma 3.8]{ApisaWrightHighRank}, it follows that there is a nonzero vanishing cycle $v$ that vanishes when paired with any element of $\ker(p)$, i.e. $v$ is an absolute cycle. Set $V_\sigma := p(T\cM)^\sigma$ and $V' := \bigoplus_\sigma V_\sigma$ with the sum ranging over embeddings $\sigma$ of the totally real field $k(\cM)$ into $\bR$. By Wright \cite{WrightFieldofDef}, this direct sum is of symplectic and sympletically orthogonal subspaces and, since $\cM$ is algebraically primitive,  $H^1(X; \bC) = V'$. By Poincar\'e duality, we will abuse notation and say that this is also a direct sum decomposition of $H_1(X; \bC)$. By Filip \cite{Filip2}, there is an order $\cO$ in $k(\cM)$ that acts on $H_1(X; \bZ)$ by real multiplication where $\alpha \in \cO$ acts on $V_\sigma$ by multiplication by $\sigma(\alpha)$. The subspace spanned by the absolute vanishing cycles is invariant under the action of $\cO$ by Bainbridge-M\"oller \cite[Proposition 5.5]{BainbridgeMoller}. Write $v = \sum_\sigma v_\sigma$ where $v_\sigma \in V_\sigma$. Let $\alpha$ be a primitive element in $\cO$. Then $\alpha^k \cdot v = \sum_\sigma \sigma(\alpha^k) v_\sigma$ for any integer $k$. Labelling the embeddings of $k(\cM)$ into $\bR$ as $\sigma_1,\hdots, \sigma_d$ and using that the $d \times d$ matrix $(\sigma_i(\alpha^j))$ is invertible, it follows that the span of $\cO \cdot v$ has a basis $(v_{\sigma_i})_{i=1}^d$ and, since $v$ is integral, $v_{\sigma_i} \ne 0$ for all $i$. Since these basis elements are pairwise symplectically orthogonal, $\cO \cdot v$ spans an isotropic $d$-dimensional subspace. By Chen-Wright \cite{ChenWrightWYSIWYG}, $T_{\Col_{\bfC}(X, \omega)} \cH_{\bfC}$ can be identified with $\mathrm{Ann}(V)$, which is the annihilator of the space $V$ of vanishing cycles in $H_1(X, \Sigma; \bC)$, where $\Sigma$ is the set of singularities of $(X, \omega)$. The genus of $\Col_{\bfC}(X, \omega)$ is half the dimension of $\cH_{\bfC}$, which is the rank of the maximal symplectic subspace of $p(\mathrm{Ann}(V))$ by Apisa-Wright \cite[Corollary 3.5]{ApisaWrightHighRank}. This subspace is contained in $p(\mathrm{Ann}(\cO \cdot v))$, and hence has dimension at most $2g-2d$. Thus, $\Col_{\bfC}(X, \omega)$ has genus at least $d$ less than that of $(X, \omega)$.
\end{proof}

\begin{lemma}\label{lemma:genusbytwo}
Suppose that $\bfC$ is a generic equivalence class of cylinders on a surface $(X, \omega)$ in an algebraically primitive invariant subvariety $\cM$. If $v \in \mathrm{Twist}(\bfC)$ is typical, then $\cM_v$ is algebraically primitive. 

If $g$, $d$, and $r$ are the genus, degree, and rank of $\cM$, respectively, then the (possibly disconnected) surfaces in $\cM_v$ have genus $g$ if $\bfC$ is involved with rel and $g-d$ otherwise. 
\end{lemma}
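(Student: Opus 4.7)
The strategy is to prove both assertions by first establishing, in both cases, that $k(\cM_v) = k(\cM)$ and that $\cM_v$ is algebraically primitive; the genus is then determined through the algebraic-primitivity formula $g' = \mathrm{rank}(\cM_v) \cdot d$, and it remains only to compute the rank. The key observation is that the real multiplication order $\cO \subset k(\cM)$ from $\cM$ descends to the boundary. As in the proof of Proposition \ref{CylinderSpan:Prop}, the subspace of $H_1(X; \bC)$ spanned by vanishing cycles is $\cO$-invariant (by Bainbridge-M\"oller), so $\cO$ induces a well-defined action on the absolute homology of $\Col_v(X, \omega)$ that preserves $p(T\cM_v)$. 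Since $\cO$ is a domain and $p(T\cM_v)$ is a nonzero $\cO$-module, the induced action is faithful, giving $k(\cM) \subseteq k(\cM_v)$; the reverse inclusion is standard for degenerations, so $k(\cM_v) = k(\cM)$. Moreover, the algebraic-primitivity decomposition $H^1(X; \bC) = \bigoplus_\sigma V_\sigma$ descends via symplectic reduction by the $\cO$-invariant vanishing subspace to a decomposition $H^1(\Col_v(X, \omega); \bC) = \bigoplus_\sigma V_\sigma(\cM_v)$, proving that $\cM_v$ is algebraically primitive.

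For the rank, when $\bfC$ is involved with rel, the cited Apisa-Wright result gives $\mathrm{rank}(\cM_v) = r$ directly, so $g' = rd = g$. When $\bfC$ is not involved with rel, I adapt the argument of Proposition \ref{CylinderSpan:Prop}: a cross-curve $w$ vanishing under the typical collapse is an absolute cycle, and its $\cO$-orbit — which lies inside the $\cO$-invariant vanishing subspace — spans a $d$-dimensional isotropic subspace with one-dimensional component in each $V_\sigma$. The dimension equation $\dim \cM_v = \dim \cM - 1$ (Apisa-Wright Lemma 11.2), together with the formula $\dim \cM = 2\,\mathrm{rank}(\cM) + \mathrm{rel}(\cM)$, caps the size of the $\cO$-invariant vanishing subspace at this minimum, so $V^{\text{van}}_{\mathrm{id}}$ has $\bC$-dimension exactly one inside $V_{\mathrm{id}} = p(T\cM)$; symplectic reduction then yields $\mathrm{rank}(\cM_v) = r - 1$ and $g' = (r-1)d = g - d$.

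The main obstacle is rigorously carrying out the descent of the real multiplication to the boundary, including verifying that the induced $\cO$-module structure on $p(T\cM_v)$ is faithful and that the algebraic-primitivity decomposition $\bigoplus_\sigma V_\sigma$ survives symplectic reduction as claimed. A secondary subtlety is the possibly disconnected boundary surface: the $\cO$-action may permute components, but since algebraic primitivity is a statement about total dimension and total genus, the argument goes through on each $\cO$-orbit of components unchanged.
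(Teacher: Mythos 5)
Your proposal inverts the paper's logic, and the inversion is where the gap lies. The paper never verifies the decomposition $H^1(\Col_v(X,\omega);\bC)=\bigoplus_\sigma V_\sigma(\cM_v)$ directly. It cites preservation of the field of definition and the behavior of rank under collapse, and then plays the universal lower bound $g' \geq d\cdot\mathrm{rank}(\cM_v)$ (Wright: the Galois conjugates of $p(T\cM_v)$ are symplectic and symplectically orthogonal inside $H^1$ of the possibly disconnected boundary surface) against an upper bound on $g'$: the trivial bound $g'\leq g$ when $\bfC$ is involved with rel, and Proposition \ref{CylinderSpan:Prop} when it is not. Equality of the two bounds \emph{is} algebraic primitivity, so primitivity comes out as the conclusion. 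Your plan is to prove primitivity first and read off the genus from $g'=d\cdot\mathrm{rank}(\cM_v)$. But primitivity of $\cM_v$ is exactly the statement $g'\leq d\cdot\mathrm{rank}(\cM_v)$, i.e., an upper bound on the genus of the boundary; so your claim that the decomposition ``descends via symplectic reduction'' onto all of $H^1$ of the boundary is not a routine descent --- it is the entire content of the lemma, and you correctly flag it as your main unresolved obstacle. Without an independent upper bound on $g'$ (which is what Proposition \ref{CylinderSpan:Prop} supplies in the non-rel case), the argument does not close.

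The rank computation in the non-rel case has a second, concrete problem. You claim that $\dim\cM_v=\dim\cM-1$ ``caps'' the $\cO$-invariant vanishing subspace so that its component in $V_{\mathrm{id}}$ is exactly one-dimensional. The dimension drop measures how many linear conditions the vanishing cycles impose on $T_{(X,\omega)}\cM$, not the dimension of their span in homology; these are different numbers (indeed, in Proposition \ref{CylinderSpan:Prop} the span of the absolute vanishing cycles is at least $d$-dimensional even though the tangent space drops by only one dimension). Also, a cross curve of a collapsing cylinder is a relative cycle; the existence of a nonzero \emph{absolute} vanishing cycle in Proposition \ref{CylinderSpan:Prop} is deduced from the rank drop of the ambient stratum via Apisa-Wright Lemma 3.8, not from the cross curve itself. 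The clean route is the paper's: take $\mathrm{rank}(\cM_v)=r-1$ from the cited Apisa-Wright results, get $g'\geq d(r-1)=g-d$ from Wright's inequality, and get $g'\leq g-d$ from Proposition \ref{CylinderSpan:Prop}.
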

\begin{proof}
Since the field of definition is preserved under degenerations\footnote{This is shown in Mirzakhani-Wright \cite[Corollary 2.14]{MirzakhaniWrightBoundary} using a consequence of \cite[Theorem 2.9]{MirzakhaniWrightBoundary}, that was conditional in the case that $\cM_v$ is disconnected. The result was shown unconditionally in Chen-Wright \cite[Lemma 6.5]{ChenWrightWYSIWYG}. }, the degree of $\cM_v$ is also $d$. If $\bfC$ is involved with rel, then $\mathrm{rank}(\cM_v) = \mathrm{rank}(\cM)$ as explained above. Since $g = dr$ by assumption and since the product of degree and rank is always a lower bound on genus, it follows that the surfaces in $\cM_v$ are still genus $g$ surfaces. 

Therefore, it remains to consider the case where $\bfC$ is not involved with rel. In this case, $\mathrm{rank}(\cM_v) = r -1$ as explained above and so the product of degree and rank, which is $g-d$ is a lower bound on the genus of surfaces in $\cM_v$. 
It is also an upper bound by Proposition \ref{CylinderSpan:Prop}. 
\end{proof}

\begin{lemma}
\label{lemma:ConnCompsDeg}
If $d$, $v$, $g$, and $\cM$ are as in Lemma \ref{lemma:genusbytwo}, then the surfaces in $\cM_v$ have at most $d$ components provided that $\cM$ has rank at least two. If equality holds and $\mathrm{rank}(\cM_v) > 1$, then $\cM_v$ is prime and contained in $\cM_1 \times \hdots \times \cM_d$, where $\cM_i$ is a hyperelliptic locus or connected component of a stratum of surfaces of genus $2 \cdot \rank(\cM_v)$.
\end{lemma}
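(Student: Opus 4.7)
The plan is to exploit the real multiplication structure on the Jacobians of surfaces in $\cM_v$, which is available thanks to Lemma~\ref{lemma:genusbytwo}: that lemma ensures $\cM_v$ is algebraically primitive of degree $d$. Let $(X_v,\omega_v)\in\cM_v$ have components $X^{(1)},\ldots,X^{(k)}$. By Filip, an order $\cO\subset k(\cM_v)$ of degree $d$ acts on $H_1(X_v;\bZ)$, and algebraic primitivity identifies the Galois conjugates $V^\sigma$ with the $\sigma(\alpha)$-eigenspaces of any primitive $\alpha\in\cO$.

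The first task is to check that $\cO$ preserves the decomposition $H_1(X_v;\bQ)=\bigoplus_i H_1(X^{(i)};\bQ)$. This should follow from compatibility of real multiplication with the principal polarization: the Jacobian of a disconnected surface is the product of its component Jacobians with the product polarization, and a commutative order preserving this polarization either stabilizes each factor or permutes isomorphic factors, with the latter possibility controllable by considering the action of a primitive $\alpha$. Granting this, the self-adjointness computation $\sigma(\alpha)\langle x,y\rangle = \tau(\alpha)\langle x,y\rangle$ already used in the proof of Proposition~\ref{CylinderSpan:Prop} shows that the $\cO$-eigenspaces $E^{(i)}_\sigma \subset H^1(X^{(i)};\bC)$ are pairwise symplectically orthogonal. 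Non-degeneracy of the intersection form on each $H^1(X^{(i)};\bC)$ then forces each $E^{(i)}_\sigma$ to be symplectic, and in particular even-dimensional, while rationality of the $\cO$-action on $H^1(X^{(i)};\bQ)$ forces $a_i := \dim E^{(i)}_\sigma$ to be independent of $\sigma$.

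Since $\cM_v$ is algebraically primitive, $V^\sigma$ is the full $\sigma$-eigenspace of $\cO$ on $H^1(X_v;\bC)$, and hence $\sum_i a_i = \dim V^\sigma = 2\,\rank(\cM_v)$. Combined with $a_i \geq 2$ for each $i$, this yields the claimed bound on the number of components (the standing assumption $\rank(\cM)\geq 2$ is what lets one reach the clean statement $k\leq d$). For the equality case with $\rank(\cM_v)>1$, the inequalities above become tight and a short numerology argument, using $2g_i = d\,a_i$, shows that the projection of $\cM_v$ to each component $X^{(i)}$ is a full-rank invariant subvariety of that component. Mirzakhani-Wright~\cite{MirzakhaniWrightFullRank} then identifies each such projection as a connected component of a stratum or a hyperelliptic locus of the claimed genus, giving the containment $\cM_v\subseteq \cM_1\times\cdots\times\cM_d$. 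Primality of $\cM_v$ is confirmed by a dimension count: the real multiplication links the $d$ factors by genuine constraints, so $\cM_v$ is strictly smaller than the full product and in particular is not itself a nontrivial product.

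The main obstacle I anticipate is the first step, rigorously showing that $\cO$ preserves the component decomposition. Ruling out the possibility that $\cO$ permutes isomorphic components is the delicate point, and may require an appeal to a structural Hodge-theoretic input such as Bainbridge-M\"oller~\cite[Proposition~5.5]{BainbridgeMoller} or Filip's~\cite{Filip2} refinement of real multiplication on degenerate Jacobians. Once that is in hand, the rest of the proof is a straightforward eigenspace count combined with the Mirzakhani-Wright full-rank classification.
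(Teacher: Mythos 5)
There is a genuine gap, and it is quantitative: your eigenspace count proves the wrong bound. Writing $k$ for the number of components, your argument gives $\sum_{i=1}^k a_i = \dim V^\sigma = 2\,\rank(\cM_v)$ with each $a_i \geq 2$, hence $k \leq \rank(\cM_v)$. The lemma asserts $k \leq d$, and these are different numbers: in this paper $d=2$ always, while the lemma is applied to boundaries of invariant subvarieties of arbitrary rank $\geq 2$ (e.g.\ rank three in the proof of Theorem \ref{T:main}), so $\rank(\cM_v) > d$ occurs in exactly the situations where the lemma is needed. Your parenthetical remark that the hypothesis $\rank(\cM)\geq 2$ ``lets one reach the clean statement $k\leq d$'' is a non sequitur; that hypothesis is needed only to divide by $\rank(\cM)-1$ in the case where $\bfC$ is not involved with rel. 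Dually, your real-multiplication argument yields the lower bound $g_i \geq d$ on each component's genus (each component carries all $d$ eigenspaces, each symplectic of dimension $\geq 2$), whereas what the lemma requires is $g_i \geq \rank(\cM_v)$. That bound comes from a completely different input: Apisa-Wright \cite[Lemma 9.1]{ApisaWrightHighRank} shows $\cM_v$ is \emph{prime}, and primality forces each component to have genus at least $\rank(\cM_v)$. The paper's entire proof is then the two-line numerology $d\cdot\rank(\cM_v) = \sum g_i \geq k\cdot\rank(\cM_v)$ (using Lemma \ref{lemma:genusbytwo} to compute the total genus in the two cases), with the equality case forcing $g_i = \rank(\cM_v)$, i.e.\ each projection is full rank, whence Mirzakhani-Wright applies. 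No real multiplication on the boundary surface is invoked.

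This also inverts the logical role of primality in your write-up: it is an input cited at the outset, not a conclusion to be ``confirmed by a dimension count'' at the end. Your justification there (that $\cM_v$ is a proper subvariety of $\cM_1\times\cdots\times\cM_d$ and hence not a nontrivial product) does not establish primality: a proper subvariety of a product can perfectly well itself be a product of smaller invariant subvarieties. Finally, even setting aside the main issue, your first step --- that the order $\cO$ preserves the decomposition $H_1(X_v;\bQ)=\bigoplus_i H_1(X^{(i)};\bQ)$ and that Filip's real multiplication is available on the (possibly degenerate, disconnected) boundary surfaces at all --- is acknowledged but not supplied, and your equality-case analysis ($k=d$) does not force $a_i=2$ or $g_i=\rank(\cM_v)$ unless $\rank(\cM_v)=d$. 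The fix is to replace the Hodge-theoretic count with the primality bound $g_i\geq\rank(\cM_v)$ and run the elementary genus count.
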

\begin{proof}
Assume that a surface in $\cM_v$ has components labelled $\{1, \hdots, n\}$ so that the $i$th component has genus $g_i$. Since, by Apisa-Wright \cite[Lemma 9.1]{ApisaWrightHighRank}, $\cM_v$ is prime, $g_i \geq \rank(\cM_v)$ for all $i$.

First assume that $\rank(\cM_v) = \rank(\cM)$, then 
\[ \rank(\cM) = \frac{g}{d} \geq \frac{1}{d} \sum g_i \geq \frac{n}{d}\rank(\cM). \]
It is immediate that $d \geq n$ and, if equality holds, then $g_i = \mathrm{rank}(\cM_v)$ for all $i$.

Now assume that $\rank(\cM_v) = \rank(\cM)-1$. By Lemma~\ref{lemma:genusbytwo}, $g-d \geq \sum g_i$.  This yields
\[ \rank(\cM) = \frac{g}{d} \geq \frac{1}{d} \left( d + \sum g_i \right) \geq 1 + \frac{n}{d} \rank(\cM_v). \]
Equivalently,
\[ \rank(\cM) - 1 \geq \frac{n}{d} \left( \rank(\cM) - 1 \right). \]
Since $\rank(\cM) > 1$, it follows that $d \geq n$ and, if equality holds, then $g_i = \mathrm{rank}(\cM_v)$ for all $i$.

Let $\cM_i$ be the projection of $\cM$ onto the $i^{th}$ component. When $\cM_v$ consists of surfaces with exactly $d$ connected components, we have seen that $\cM_i$ has full rank (and rank at least two) for all $i$ by assumption and by primality. The final claim holds by Mirzakhani-Wright \cite[Theorem 1.1]{MirzakhaniWrightFullRank}.
\end{proof}

\section{Proof of Theorem \ref{T:main} assuming Theorems \ref{T:main-tool} and \ref{T:Main-tool2}}\label{S:ProofMainTheorem}

\begin{lemma}
\label{lemma:NonArithConnProdTwoSC}
If $\cM$ is a non-arithmetic invariant subvariety and $\textbf{C}$ is a generic equivalence class of cylinders on a surface $(X, \omega) \in \cM$ that does not cover $(X, \omega)$ and whose twist space is spanned by $\sigma_{\bfC}$, then $\Col_{\bfC}(\bfC)$ contains at least two saddle connections.
\end{lemma}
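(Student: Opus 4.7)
My approach is a proof by contradiction: assume $\Col_\bfC(\bfC)$ consists of a single saddle connection $s$, and derive a violation of non-arithmeticity. The first task is to unpack this assumption combinatorially. Each cylinder $C_i \in \bfC$ collapses to one or more saddle connections on $\Col_\bfC(X,\omega)$ whose total length equals its circumference $\ell_i$. For only one saddle connection $s$ to result, each $C_i$ must be simple (with a single boundary saddle connection on top and one on bottom), and the cylinders must be arranged in a chain whose boundary saddle connections all identify with $s$ after the top--bottom gluings of each cylinder. In particular, every cylinder in $\bfC$ shares a common circumference $\ell = |s|$, and the core curves share a common absolute homology class $[\gamma] \in H_1(X; \bZ)$.

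Next, I would exploit non-arithmeticity. Since $k := k(\cM)$ has degree $d \geq 2$ over $\bQ$, by Filip \cite{Filip2} and Wright \cite{WrightFieldofDef} there is an order $\cO \subset k$ acting on $H_1(X; \bZ)$ by real multiplication, with Galois decomposition $V' = \bigoplus_\sigma V_\sigma \subset H^1(X; \bR)$, where $V = V_{\mathrm{id}} := p(T_{(X,\omega)}\cM)$. Since $\sigma_\bfC \in T_{(X,\omega)}\cM$, we have $p(\sigma_\bfC) = \left(\sum_i h_i\right) p(\gamma^*) \neq 0$, so $p(\gamma^*) \in V$ is a nonzero integer class. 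Writing $p(\gamma^*) = \sum_\sigma v_\sigma$ with $v_\sigma \in V_\sigma$ and applying the Vandermonde argument as in Proposition \ref{CylinderSpan:Prop}, the $\cO$-orbit $\cO \cdot p(\gamma^*)$ spans a $d$-dimensional integer sublattice of $V' \cap H^1(X; \bZ)$, producing $d$ Galois-conjugate integer cohomology classes with distinct periods.

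The contradiction would then come from realizing these $d$ classes geometrically. By Wright's cylinder deformation theorem \cite{WrightCylDef} together with the boundary analysis of Apisa-Wright \cite{ApisaWrightDiamonds}, I expect each Galois-conjugate class to correspond either to the Poincar\'e dual of a core curve of a cylinder parallel to those in $\bfC$ on $(X, \omega)$ -- which would force additional parallel cylinders with circumferences $\sigma(\ell) \neq \ell$ and hence additional saddle connections in $\Col_\bfC(\bfC)$ -- or to analogous data on the boundary $\Col_\bfC(X, \omega)$, which inherits the field of definition $k$ and so cannot be compatible with the combinatorics of a single collapsed saddle connection. The main obstacle will be making this final translation precise, particularly in the non-algebraically-primitive case or when $\bfC$ is a single simple cylinder; in the latter degenerate case I would argue that the nonzero Galois components $v_\sigma$ for $\sigma \neq \mathrm{id}$ obstruct $p(\gamma^*)$ from being realized by a single cylinder whose Poincar\'e dual exhausts the one-dimensional twist space $\langle \sigma_\bfC \rangle$.
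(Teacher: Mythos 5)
Your step 1 contains a genuine error, and it is the crux of the matter. Knowing that $\Col_{\bfC}(\bfC)$ is a single saddle connection $s$ does \emph{not} force every cylinder in $\bfC$ to be simple with circumference $|s|$. What it forces is that $\overline{\bfC}$ is a subsurface whose boundary consists of exactly two saddle connections $\sigma_0,\sigma_1$ (both of length $|s|$); the interior of $\overline{\bfC}$ can still contain several cylinders separated by interior horizontal saddle connections, with circumferences equal to \emph{various integer multiples} of $|s|$, not all equal to $|s|$. Even this integrality is not automatic: the paper proves it by observing that every nonsingular vertical leaf in $\overline{\bfC}$ runs from an interior point of $\sigma_0$ to an interior point of $\sigma_1$, so every horizontal saddle connection in $\overline{\bfC}$ is tiled by vertical translates of $\sigma_0$ and hence has length in $|\sigma_0|\cdot\bZ$. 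Note also that if your stronger claim (all circumferences equal) were true, the proof would end immediately: all circumference ratios would be $1$, contradicting Wright's theorem \cite[Theorem 7.1]{WrightCylDef} that in a non-arithmetic $\cM$ every equivalence class contains two cylinders with irrational ratio of circumferences. That citation is exactly how the paper closes the argument after establishing integrality; your proposal never uses non-arithmeticity in this direct way.

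Steps 2 and 3 do not repair this. The lemma is stated for an arbitrary non-arithmetic invariant subvariety, so the full real-multiplication structure on $H_1(X;\bZ)$ and the Vandermonde argument of Proposition \ref{CylinderSpan:Prop} (which are used there in the algebraically primitive setting) are not available in the required generality, as you partly acknowledge. More importantly, the final step --- realizing the Galois-conjugate classes $v_\sigma$ as core curves of parallel cylinders with conjugate circumferences --- is stated as an expectation rather than an argument, and there is no mechanism in \cite{WrightCylDef} or \cite{ApisaWrightDiamonds} that produces a cylinder Poincar\'e-dual to a Galois conjugate of a given class. The correct and much more elementary route is the foliation argument above: two boundary saddle connections, vertical leaves forcing integrality of all circumferences relative to $|\sigma_0|$, and then Wright's irrationality theorem for the contradiction.
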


\begin{proof}
Suppose in order to derive a contradiction that $\Col_\bfC(\bfC)$ consists of a single saddle connection. Suppose without loss of generality that $\bfC$ is horizontal. By Apisa-Wright \cite[Lemma 4.13]{ApisaWrightHighRank}, excluding vertical saddle connections, every vertical leaf through a point in $\bfC$ must eventually intersect the boundary of $\overline{\bfC}$. Since $\Col_{\bfC}(\bfC)$ is a single saddle connection, it follows that there are two saddle connections $\sigma_0$ and $\sigma_1$ on $(X, \omega)$ so that $\overline{\bfC}$ is one component of $(X, \omega) - (\sigma_0 \cup \sigma_1)$. Without loss of generality, let $\sigma_0$ (resp. $\sigma_1$) be the bottom (resp. top) boundary of $\overline{\bfC}$.  

All vertical trajectories traveling up from $\sigma_0$ must pass through $\sigma_1$. Let $I$ be the interior of $\sigma_0$. As noted in the previous paragraph, every maximal vertical line segment contained in $\overline{\bfC}$, aside from vertical saddle connections, begins at an interior point of $\sigma_0$ and ends at an interior point of $\sigma_1$. Therefore, after deleting finitely many points (coming from intersections with vertical saddle connections), every horizontal saddle connection in $\overline{\bfC}$ can be covered by finitely many copies of $I$ that have been translated by the vertical straight-line flow. So every horizontal saddle connection in $\overline{\bfC}$ has a length that is an integral multiple of the length of $I$. We conclude that every cylinder in $\bfC$ has circumference which is an integral multiple of the length of $I$.  This contradicts the fact that there exist two cylinders in $\bfC$ that have an irrational ratio of circumferences since $\cM$ is non-arithmetic (by Wright \cite[Theorem 7.1]{WrightCylDef}).
%
\end{proof}

\begin{proposition}\label{P:induction}
Suppose that $\cM$ is an algebraically primitive invariant subvariety with $\rank(\cM) \geq 2$, quadratic field of definition, and whose surfaces contain no marked points. If $\rank(\cM) = 2$, then suppose that $\mathrm{rel}(\cM) > 0$.

Then there is an algebraically primitive component $\cM'$ of the boundary of $\cM$ so that $\dim(\cM') = \dim(\cM)-1$ and so $\cM'$ consists of connected surfaces with no free marked points. If $\mathrm{rel}(\cM) \geq 1$, then additionally $\rank(\cM') = \rank(\cM)$.

Moreover, if $\bfC_1$ and $\bfC_2$ are two disjoint generic equivalence classes of cylinders on a surface in $\cM$ where at least one is involved with rel if $\mathrm{rank}(\cM) = 2$, then there is a typical degeneration $v$ in one of their twist spaces so that $\cM' = \cM_v$.
%
\end{proposition}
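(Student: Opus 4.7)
The plan is to construct $\cM'$ as a boundary degeneration $\cM_v$ along a typical element $v$ of one of the two given twist spaces; the first claim of the proposition will then follow from the moreover part once disjoint generic equivalence classes of cylinders are produced on some surface in $\cM$ by standard cylinder-existence techniques, with the hypothesis $\mathrm{rel}(\cM) > 0$ in the rank-$2$ case used to ensure at least one can be chosen involved with rel.

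Given $\bfC_1, \bfC_2$ as in the moreover statement, I would relabel so that $\bfC_1$ is the class involved with rel whenever $\rank(\cM) = 2$. Apply \cite[Lemma 11.2]{ApisaWrightHighRank} to pick typical $v_i \in \mathrm{Twist}(\bfC_i)$. By the remarks preceding Proposition \ref{CylinderSpan:Prop} together with Lemma \ref{lemma:genusbytwo}, each $\cM_{v_i}$ has dimension $\dim(\cM)-1$ and is algebraically primitive with the same quadratic field $k(\cM)$, and $\rank(\cM_{v_i}) = \rank(\cM)$ precisely when $\bfC_i$ is involved with rel. In the rank-$2$ case one focuses on $\cM_{v_1}$, which has rank $2$; in the rank $\geq 3$ case both $\cM_{v_i}$ have rank $\geq 2$ and are candidates.

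The central step is to show that some candidate consists of connected surfaces. By Lemma \ref{lemma:ConnCompsDeg} with $d = 2$, every candidate has at most two components, and equality forces the candidate to be prime and contained in a product $\cN_i \times \cN_i'$ of two full-rank loci (components of strata or hyperelliptic loci), both defined over $\bQ$. Suppose for contradiction that every candidate has two components. Algebraic primitivity with quadratic field of definition forces the nontrivial Galois embedding of $k(\cM_{v_i})$ to swap the two factors, so $\cN_i = \cN_i'$. Combining this Galois-swap symmetry with disjointness of $\bfC_1$ and $\bfC_2$ — which forces the surviving class to lie entirely on one component of each boundary surface — and Lemma \ref{lemma:NonArithConnProdTwoSC} applied to the non-arithmetic boundary cylinder data, I would derive a cylinder-structure incompatibility with disjointness of $\bfC_1$ and $\bfC_2$ on $(X, \omega)$. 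This Galois/disjointness comparison is the principal obstacle.

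Once connectedness is secured, the absence of free marked points in the chosen $\cM_v$ is straightforward: since $\cM$ contains no marked points, every cone point of $(X, \omega)$ has positive order, and a typical cylinder collapse only identifies such cone points, producing higher-order zeros rather than regular ones. Combining connectedness with Lemma \ref{lemma:genusbytwo} yields the desired $\cM' = \cM_v$ with $\rank(\cM') = \rank(\cM)$ whenever $\mathrm{rel}(\cM) \geq 1$ (by taking $v = v_1$).
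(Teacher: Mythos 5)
There is a genuine gap, and it sits exactly where you flag it: the connectedness step. You propose to assume both candidate degenerations $\cM_{v_1}$, $\cM_{v_2}$ are disconnected and derive a contradiction, but this cannot work as stated. First, your claim that disjointness of $\bfC_1$ and $\bfC_2$ forces the surviving equivalence class to lie entirely on one component of the boundary surface is false: in the paper's own situation (Lemma \ref{L:DisconnectedRank2Rel0Case}, Figure \ref{F:example:sub1}) the class $\Col_{\bfC_2}(\bfC_1)$ has one simple cylinder on \emph{each} of the two components. Second, disconnected degenerations genuinely occur under the hypotheses of the proposition, so no contradiction is available; the paper does not exclude them. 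Instead, when $\cM_v$ (for $v$ typical in $\mathrm{Twist}(\bfC_1)$) is disconnected, it uses Lemma \ref{lemma:ConnCompsDeg} to place $\cM_v$ in a product of full rank loci with surjective projections, concludes that $\Col_v(\bfC_1)$ and $\Col_v(\bfC_2)$ are simple or half-simple cylinders, and then selects a \emph{different} degeneration which is connected because collapsing simple and half-simple cylinders cannot disconnect: a typical element of $\mathrm{Twist}(\bfC_2)$ when $\bfC_2$ is involved with rel or $\mathrm{rel}(\cM)=0$, and otherwise a typical element of $\mathrm{Twist}(\bfC_1)$ supported on $\bfC_1 - (\bfC_1)_v$. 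This last option is essential and absent from your plan: if $\mathrm{rel}(\cM)\geq 1$, $\bfC_2$ is not involved with rel, and only the $\bfC_2$-degeneration happened to be connected, you would lose the required equality $\rank(\cM')=\rank(\cM)$; your closing ``by taking $v=v_1$'' presumes the connected candidate is the rel-involved one, which your argument does not provide. (The asserted Galois-swap symmetry forcing the two factors to coincide is also unsubstantiated and is not needed in the paper's argument.)

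The marked-point claim is likewise not straightforward, and your justification is incorrect. Collapsing cylinders \emph{lowers} the orders of zeros and does produce marked points on boundary surfaces even when $\cM$ has none: in Section \ref{S:Rk2Rel1}, for instance, the boundary $\cM_{\bfC_2}$ of a marked-point-free rank two rel one locus consists of genus two surfaces with the golden point and a pair of points $p,p'$ marked. The real issue is whether such points are \emph{free}, and the paper settles it by citing Apisa--Wright Lemma 11.6 when $\mathrm{rel}(\cM)\geq 1$ and Lemma 10.5 in the rel zero case, the hypothesis of the latter being verified precisely by Lemma \ref{lemma:NonArithConnProdTwoSC} (non-arithmeticity enters here, which is why that lemma is proved in this paper). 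So both the connectedness step and the no-free-marked-points step need arguments that your proposal does not supply.
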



\begin{proof}
Let $\bfC_1$ and $\bfC_2$ be two disjoint generic equivalence classes of cylinders on a surface $(X, \omega) \in \cM$ with disjoint boundaries. Suppose without loss of generality that $\bfC_1$ is involved with rel if $\mathrm{rel}(\cM) \geq 1$. This can be arranged by perturbing a cylindrically stable surface created in \cite[Lemma 7.10]{ApisaWrightHighRank}. 




Let $v$ be a typical degeneration in $\mathrm{Twist}(\bfC_1)$, which exists by Apisa-Wright \cite[Lemma 11.2]{ApisaWrightHighRank}. If surfaces in $\cM_v$ are connected, then, setting $\cM' = \cM_v$, we are done by Lemma~\ref{lemma:genusbytwo}.  The claim about marked points will be shown in the final paragraph of this proof.

Suppose therefore that $\cM_v$ consists of disconnected surfaces and therefore, by Lemma \ref{lemma:ConnCompsDeg}, $\cM_v$ is contained in a product of hyperelliptic loci and components of strata of rank at least two so that the projection of $\cM_v$ to each factor is a surjection. In particular, since $\Col_v(\bfC_1)$ and $\Col_v(\bfC_2)$ remain generic cylinders, these cylinders are simple and half-simple by Apisa \cite[Corollary 6.6]{Apisa-Lyapunov} or Apisa-Wright \cite{ApisaWrightMPts}. Moreover, as with any collection of generic cylinders in strata or hyperelliptic loci, collapsing any subset of these cylinders does not change the property that the remaining ones are simple or half-simple. This observation is important since collapsing simple and half-simple cylinders does not disconnect a surface. 

Suppose first that either $\bfC_2$ is involved with rel or that $\mathrm{rel}(\cM) = 0$. Let $w$ be a typical element of the twist space of $\bfC_2$. As explained in the previous paragraph, $\Col_w(X, \omega)$ is connected. Since $w$ is typical, we may conclude by setting $\cM' = \cM_{w}$. The claim about free marked points will be shown in the final paragraph.

Suppose second that $\mathrm{rel}(\cM) > 0$ and that $\bfC_1$ is involved with rel. Setting $\bfC_1' := \bfC_1 - (\bfC_1)_v$, there is an  element $v'$ of $\mathrm{Twist}(\bfC_1)$ supported on $\bfC_1'$. As in \cite[Proof of Lemma 11.2]{ApisaWrightHighRank}\footnote{The proof notes that there is a collection of hyperplanes in the twist space containing elements $u$ so that, along $(X, \omega) + tu$, two cylinders whose heights are not generically constant multiples of each other have a constant ratio of heights. Any deformation off of this collection of hyperplanes can be used to form a typical degeneration.}, we may perturb $v'$ to a typical degeneration $w$ so that $\bfC_{w}$ is a subset of $\bfC_1'$. As in the previous paragraph, $\Col_w(X, \omega)$ is connected and, since $w$ is typical, we may conclude by setting $\cM' = \cM_{w}$.

We will now show that the surfaces in $\cM'$ contain no free marked points. If $\mathrm{rel}(\cM) \geq 1$, this is immediate from Apisa-Wright \cite[Lemma 11.6]{ApisaWrightHighRank} and the assumption that $\cM$ has no free marked points. If $\cM$ is rel zero, then it follows by \cite[Lemma 10.5]{ApisaWrightHighRank}; note that the hypothesis that $\text{Col}_v(\mathbf{C}_1)$ contains at least two saddle connections holds by Lemma \ref{lemma:NonArithConnProdTwoSC}. 
\end{proof}



\begin{proof}[Proof of Theorem \ref{T:main} given Theorems \ref{T:main-tool} and \ref{T:Main-tool2}:] First we will show that if $\cM$ is an orbit closure so that $\For(\cM)$ is the $(1,1,1,7)$-locus, then the surfaces in $\cM$ are just collections of free marked points on surfaces in the $(1,1,1,7)$-locus. By Theorem \ref{T:main-tool}, the $(1,1,1,7)$-locus has no periodic points. By \cite{ApisaWrightMPts}, it therefore suffices to show that the $(1,1,1,7)$-locus, which we call $\cN$, does not belong to a (full) locus of covers $\cL$. If it did, then since full loci of covers have rational fields of definition, the tangent space of $\cL$ at a point $(X, \omega) \in \cN$ would contain both $p(T_{(X, \omega)} \cN)$ and its Galois conjugate, i.e., $\cL$ would be full rank. Since $\cN$ is contained in $\cH(6)$, this implies that $\cL$ is the hyperelliptic component of $\cH(6)$, which is not the component containing the $(1,1,1,7)$-locus, a contradiction. 



We will now show that if $\cM$ is algebraically primitive and has rank two, rel $r$, then $\For(\cM)$ is the $(1,1,1,7)$-locus. We will induct on $r$, with Theorems \ref{T:main-tool} and \ref{T:Main-tool2} forming the base cases. In particular, let $r > 1$. Suppose in order to derive a contradiction that $\cM$ is a counterexample to our claim. If $\For(\cM)$ has smaller rel than $\cM$, then $\For(\cM)$ is the $(1,1,1,7)$-locus by the induction hypothesis, so $\cM$ would not be a counterexample to the claim. Therefore, $\For(\cM)$ and $\cM$ have the same rel and so we will replace $\cM$ with $\For(\cM)$ in order to assume that the surfaces in $\cM$ have no marked points. By Proposition \ref{P:induction}, we may find an algebraically primitive invariant subvariety $\cM'$ that is rank two, rel $r-1$, and that consists of connected genus four surfaces with no free marked points. By the induction hypothesis, $\For(\cM')$ is the $(1,1,1,7)$-locus. Since $\mathrm{rel}(\cM') > 0$, the first paragraph implies that surfaces in $\cM'$ have free marked points, which is a contradiction. 




We will now show that there are no algebraically primitive invariant subvarieties with rank at least three and quadratic field of definition. We will induct on the dimension of $\cM$. Suppose first that $\cM$ has rank three rel zero. By forgetting marked points, we may suppose without loss of generality that $\cM$ has no marked points. By Proposition \ref{P:induction}, we may find an algebraically primitive invariant subvariety that is rank two, rel one, and that consists of connected genus four surfaces with no free marked points, contradicting Theorem \ref{T:Main-tool2}. This establishes the base case. The inductive step is similar, we forget marked points and apply Proposition \ref{P:induction} to produce a lower-dimensional algebraically primitive invariant subvarieties with rank at least three and quadratic field of definition, which cannot exist by the induction hypothesis.
\end{proof}

\section{The unique nonarithmetic cylinder rigid invariant subvariety in genus two}\label{S:CylinderRigidInGenusTwo}


Recall that if $\cM$ is an invariant subvariety whose points are connected translation surfaces, then $\cM$ is called \emph{cylinder rigid} if every equivalence class can be partitioned into subequivalence classes and if, additionally, subequivalent cylinders remain subequivalent as long as they persist under deformations that remain in $\cM$. In this setting, a subequivalence class of cylinders is a collection of cylinders that have a constant ratio of heights (belonging to a fixed finite set only depending on $\cM$) under all deformations that remain in $\cM$ and so that the standard shear in this collection of cylinders belongs to the tangent space. Examples of cylinder rigid invariant subvarieties include rel zero invariant subvarieties and invariant subvarieties in their boundaries. The definition of cylinder rigid invariant subvarieties and a description of their properties appears in \cite{Apisa-Lyapunov}.

Before stating the main result of the section, we say that a \emph{cylinder configuration} of a cylinder rigid rank one rel one invariant subvariety is a horizontally periodic surface in the subvariety (defined up to shearing and dilating subequivalence classes). In the sequel $\phi$ will be the golden ratio. The main result of this section is the following.

\begin{figure}[h]
\centering
\includegraphics[width = .25\linewidth]{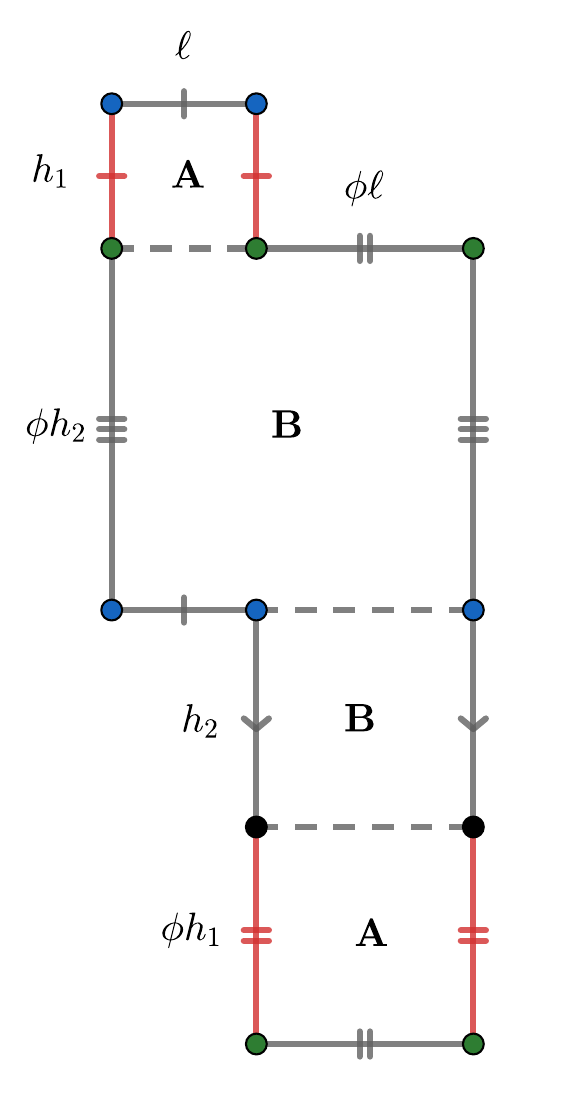}
\caption{A cylinder configuration in the golden eigenform locus with the golden point marked. The vertical and horizontal saddle connections are labelled by their lengths, which depend on free parameters $h_1, h_2$, and $\ell$.}
\label{F:GoldenCylConfig}
\end{figure}

\begin{proposition}\label{P:GenusTwoCylinderRigid}
If $\cM$ is a nonarithmetic cylinder rigid rank one rel one invariant subvariety of genus two surfaces, then $\cM$ is the golden eigenform locus with exactly one golden point marked and no other marked points. Moreover, the only cylinder configuration in this locus with two horizontal subequivalence classes is, up to rotating by $\pi$, the one in Figure \ref{F:GoldenCylConfig}.
%
\end{proposition}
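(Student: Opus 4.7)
The plan is to start from a horizontally periodic surface in $\cM$ whose horizontal cylinders split into two subequivalence classes, use the rank-one and rel-one constraints to force both the trace field and the cylinder combinatorics, and only then classify the allowed marked points.

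First, using the cylinder-finding machinery of Apisa--Wright \cite{ApisaWrightHighRank} together with the cylinder rigidity hypothesis, I would produce a surface $(X,\omega)\in\cM$ which is horizontally periodic and whose horizontal cylinders split into (at least) two subequivalence classes with an irrational ratio $\lambda>0$ of heights; the nonarithmetic hypothesis guarantees the irrational ratio via Wright's theorem, and rel-one gives the room needed to make subequivalence classes genuinely distinct. Since the underlying unmarked surface lies in $\cH(1,1)$ or $\cH(2)$, a short case analysis in genus two, constrained by Lemma~\ref{lemma:NonArithConnProdTwoSC} and by the allowed positions of zeros and marked points, restricts the topological configuration to a small list.

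Second, in each surviving combinatorial case I would write down the linear relations in absolute homology imposed by rank one: all cylinder core-curve Poincar\'e duals project onto a single line in $p(T_{(X,\omega)}\cM)$. Combined with the cylinder rigidity condition that the standard shear of each subequivalence class is a tangent vector, this produces a system of equations in the cylinder heights and circumferences. Deforming along the one-dimensional rel direction varies the free parameters (which will correspond to $h_1,h_2,\ell$ in Figure~\ref{F:GoldenCylConfig}), and in the surviving case I expect these equations to collapse to $\lambda^2=\lambda+1$, forcing $\lambda=\phi$ and $k(\cM)=\bQ(\sqrt 5)$. Together with McMullen's classification \cite{McMullenGenus2}, this identifies $\cM$ with (a sub-locus of) the golden eigenform locus in $\cH(1,1)$, and the same case analysis pins down the cylinder configuration to be the one in Figure~\ref{F:GoldenCylConfig} up to rotation by $\pi$.

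Finally, to determine the marked-point structure, I would invoke the classification of periodic points on genus two eigenform loci of M\"oller and Apisa \cite{Apisa-RankOneHyp}, together with the observation that any free marked point would raise $\mathrm{rel}(\cM)$ past one. The only marked points consistent with the configuration of Figure~\ref{F:GoldenCylConfig} are then golden points, and the saddle-connection data of the figure forces exactly one such marked point. The main obstacle I anticipate is in step two: one must carefully translate the rank-one and cylinder-rigidity conditions into equations in $h_1,h_2,\ell$, and then use the Galois action on $V\oplus V^{\sigma}$ and the integral real multiplication structure of Filip \cite{Filip2} to rule out every quadratic value of $\lambda$ other than $\phi$.
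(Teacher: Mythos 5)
There is a genuine gap at the heart of step two. You propose to first derive the trace field by writing the rank-one and cylinder-rigidity conditions as equations in the heights and circumferences and having them ``collapse to $\lambda^2=\lambda+1$,'' and only afterwards to classify the marked points. This inverts the actual logical dependence, and the computation you defer does not do what you want. On a three-cylinder surface in any nonarithmetic eigenform locus in $\cH(1,1)$ with no marked points (or only Weierstrass points marked), the rel vector $\gamma_1^*-\gamma_2^*+\gamma_3^*$ lies in the twist space, and cylinder rigidity would force it to be a combination of standard shears of subequivalence classes; this forces $\{C_1,C_3\}$ to be subequivalent with equal heights and rational ratio of moduli, hence rational ratio of circumferences, and since $|\gamma_2|=|\gamma_1|+|\gamma_3|$ \emph{all} circumference ratios are rational, contradicting nonarithmeticity by Wright's cylinder deformation theorem. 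In other words, the height/circumference equations rule out cylinder rigidity for \emph{every} $D$ rather than selecting $D=5$; the golden ratio is not recoverable from them. What actually singles out $\phi$ is that cylinder rigidity forces a non-Weierstrass periodic point to be marked (so that the rel direction becomes a combination of standard shears of subequivalence classes of the subdivided cylinder), and the classification of periodic points in genus two (Kumar--Mukamel, Apisa \cite{ApisaPerPtsGen2}) says the only eigenform locus in $\cH(1,1)$ carrying such a point is the golden one. Without that input your step two has no mechanism to eliminate the other quadratic discriminants.

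Two further problems. First, your marked-point step is both miscited (the relevant references are \cite{ApisaPerPtsGen2} and \cite{KumarMukamel}, not \cite{Apisa-RankOneHyp}) and incomplete: a free marked point on a rel-zero Teichm\"uller curve in $\cH(2)$ yields rel exactly one, so ``free points raise rel past one'' does not by itself dispose of that case, and showing that \emph{exactly one} golden point and \emph{no} Weierstrass points are marked requires an argument (the paper collides marked points inside a cylinder and uses the rotational symmetry of the decagon to derive contradictions from the resulting subequivalence structure), not just ``the saddle-connection data of the figure forces it.'' Second, the classification of the two-subequivalence-class configuration in the second assertion of the proposition is obtained in the paper by degenerating one subequivalence class and using that its boundary lands either in $\cH(0)\times\cH(0)$ or in the golden eigenform locus of $\cH(2)$, which is a one-cusped Teichm\"uller curve with a unique cylinder configuration; your proposal does not supply a substitute for this step.
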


Recall that, in the notation of McMullen \cite{McMullen-BilliardsTeichCurveSurvey}, the \emph{golden eigenform locus} is $\Omega E_5 \cap \cH(1,1)$, i.e., the locus of eigenforms for the action of real multiplication by the maximal order of $\bQ(\sqrt{5})$ in $\cH(1,1)$. The \emph{golden point} is a periodic point (and its image under the hyperelliptic involution) found on surfaces in the golden eigenform locus by Kumar-Mukamel \cite{KumarMukamel}. 

Recall that if $\cM$ is an invariant subvariety consisting of translation surfaces with marked points, then $\For(\cM)$ is the invariant subvariety formed by forgetting the marked points on the translation surfaces in $\cM$. 

\begin{lemma}\label{L:AlmostNoCylinderRigid1}
No rank one rel one cylinder rigid invariant subvariety $\cM$ has the property that $\For(\cM)$ is a non-arithmetic eigenform locus in $\cH(1,1)$ unless $\cM$ is the golden eigenform locus and a golden point is marked. 
\end{lemma}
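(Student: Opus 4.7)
The plan is to reduce the statement to the classification of periodic points on non-arithmetic eigenform loci in $\cH(1,1)$, via a bookkeeping argument on rel.

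The first step is a rel count. Non-arithmetic eigenform loci in $\cH(1,1)$ are rank one rel one, so $\For(\cM)$ has rel one. Since $\cM$ is also assumed to have rel one, and since each free marked point contributes $+1$ to the rel, $\cM$ cannot have any free marked points. Therefore every marked point of $\cM$ is a periodic point of $\For(\cM)$ in the sense of being a $\mathrm{GL}(2,\bR)$-equivariantly defined point on each surface of $\For(\cM)$. In particular, if $\cM$ differs from $\For(\cM)$ then $\For(\cM)$ must admit at least one periodic point.

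The second step is to apply the classification of periodic points on non-arithmetic eigenform loci in $\cH(1,1)$. Combining the construction of the golden points by Kumar--Mukamel \cite{KumarMukamel} with the classification of periodic points on rank one non-arithmetic invariant subvarieties (for instance Apisa \cite{Apisa-RankOneHyp} and subsequent developments), the only non-arithmetic eigenform locus in $\cH(1,1)$ possessing a periodic point is the golden eigenform locus, whose periodic points consist exactly of the two golden points, swapped by the hyperelliptic involution. Combining this with the first step forces $\For(\cM)$ to be the golden eigenform locus and the marked points on $\cM$ to be a (non-empty) subset of the golden points; in particular a golden point is marked, as required. The case $\cM = \For(\cM)$ (no marked points) is not covered by the conclusion of the lemma and is handled separately in the application of this lemma to Proposition \ref{P:GenusTwoCylinderRigid}, where the hypothesis forces the presence of marked points.

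The main obstacle is the periodic point input, which is external to the paper: the proof is essentially a wrapper around the rel accounting plus the classification of periodic points on non-arithmetic eigenform loci in $\cH(1,1)$. Cylinder rigidity, although assumed, does not enter the argument directly; it is used only insofar as marking periodic points on a cylinder rigid locus produces a locus that remains in the class under consideration, so no additional cylinder-combinatorial analysis is needed at this stage. The more delicate cylinder-combinatorial work, including the description of the specific cylinder configuration in Figure \ref{F:GoldenCylConfig}, is reserved for the proof of Proposition \ref{P:GenusTwoCylinderRigid} itself rather than this lemma.
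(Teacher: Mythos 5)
There is a genuine gap, and it sits exactly where you claim no work is needed. You assert that ``the only non-arithmetic eigenform locus in $\cH(1,1)$ possessing a periodic point is the golden eigenform locus, whose periodic points consist exactly of the two golden points.'' This is false: every eigenform locus in $\cH(1,1)$ has the six Weierstrass points as periodic points. The correct statement of the classification (Apisa \cite{ApisaPerPtsGen2}) is that the periodic points on a non-arithmetic eigenform locus in $\cH(1,1)$ are the Weierstrass points, together with the two golden points in the single case of the golden eigenform locus. Consequently your argument leaves two cases entirely unaddressed: (i) $\cM = \For(\cM)$ with no marked points, and (ii) $\cM$ obtained by marking only Weierstrass points. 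You explicitly set case (i) aside as ``not covered by the conclusion of the lemma,'' but it is covered: the lemma asserts that a rank one rel one \emph{cylinder rigid} $\cM$ over a non-arithmetic eigenform locus cannot be the bare eigenform locus, and the proof must rule this out. Note that the bare eigenform locus \emph{is} rank one rel one, so only the cylinder rigidity hypothesis can exclude it.

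This is precisely where the paper uses cylinder rigidity, which you declare ``does not enter the argument directly.'' The paper takes a three-cylinder horizontally periodic surface with core curves $\gamma_1, \gamma_2, \gamma_3$ satisfying $|\gamma_2| = |\gamma_1| + |\gamma_3|$, and notes that the rel vector $\gamma_1^* - \gamma_2^* + \gamma_3^*$ lies in the tangent space. Cylinder rigidity forces the twist space to be spanned by standard shears of subequivalence classes, so $\gamma_1^* + \gamma_3^*$ and $\gamma_2^*$ must separately lie in the tangent space, making $C_1$ and $C_3$ subequivalent with equal heights, hence with rational ratio of circumferences; combined with the additivity of circumferences this makes all circumference ratios rational, and Wright's cylinder deformation theorem then gives a rational field of definition, contradicting non-arithmeticity. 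The same argument applies verbatim when only Weierstrass points are marked (since marking Weierstrass points does not change the cylinder combinatorics in the relevant way). Only after both cases are eliminated does the periodic point classification force a non-Weierstrass periodic point to be marked, hence the golden locus and a golden point. Your rel accounting in the first step is fine and matches the paper's opening observation, but the heart of the lemma is the cylinder rigidity computation you omitted.
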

\begin{proof}
The surfaces in $\cM$ have only periodic points marked. Fix a surface $(X, \omega)$ so that $\For(X, \omega)$ has three horizontal cylinders $\{C_i\}_{i=1}^3$ with core curves $\{\gamma_i\}_{i=1}^3$. Let $\gamma_i^*$ be the element of $H^1(X, Z; \bC)$, where $Z$ is the set of zeros of $\omega$, given by taking intersections with $\gamma_i$.

We will first show that the surfaces in $\cM$ must have some marked points. If not, then since $\gamma_1^* - \gamma_2^* + \gamma_3^* \in T_{(X, \omega)} \cM$, the cylinder rigidity of $\cM$ implies that $\gamma_1^* + \gamma_3^*$ and $\gamma_2^*$ also belong to the tangent space. For cylinder rigid invariant subvarieties, each element of the twist space on a horizontally periodic surface can be written as a linear combination of standard shears in subequivalence classes, see Apisa \cite{Apisa-Lyapunov}. It follows that $C_1$ and $C_3$ are subequivalent and have identical heights. Since they are subequivalent, they also have a rational ratio of moduli and so their ratio of circumferences is rational. Since the length of $\gamma_2$ is the sum of lengths of $\gamma_1$ and $\gamma_3$, we have that the ratio of any two circumferences of cylinders in $\{C_1, C_2, C_3\}$ is rational. By Wright's cylinder deformation theorem \cite{WrightCylDef}, $\cM$ is has rational field of definition, which is a contradiction. The argument in the case where only Weierstrass points are marked is identical. Therefore, some periodic point that is not a Weierstrass point must be marked and this forces $\For(\cM)$ to be the golden eigenform locus and for the golden point to be marked on $\cM$ by Apisa \cite{ApisaPerPtsGen2}. 
\end{proof}

\begin{lemma}\label{L:AlmostNoCylinderRigid2}
If $\cM$ is a nonarithmetic cylinder rigid rank one rel one invariant subvariety of genus two surfaces, then $\cM$ is the golden eigenform locus with exactly one golden point marked and no other marked points. 
\end{lemma}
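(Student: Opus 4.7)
The plan is to reduce to Lemma~\ref{L:AlmostNoCylinderRigid1} by showing that $\For(\cM)$ must be a non-arithmetic eigenform locus in $\cH(1,1)$. Since $\cM$ has rank one, so does $\For(\cM)$, and the non-arithmeticity of $\cM$ transfers to $\For(\cM)$ because forgetting marked points does not alter the field of definition. By McMullen's classification of rank one invariant subvarieties in genus two, the only non-arithmetic possibilities for $\For(\cM)$ are an eigenform locus in $\cH(1,1)$ (rank one, rel one, three-dimensional) or a Teichm\"uller curve in $\cH(2)$ or $\cH(1,1)$ (rank one, rel zero, two-dimensional). The eigenform case is immediately handled by Lemma~\ref{L:AlmostNoCylinderRigid1}, so the task reduces to ruling out $\For(\cM)$ being a non-arithmetic Teichm\"uller curve.

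If $\For(\cM)$ is a Teichm\"uller curve, then since $\mathrm{rel}(\cM) = 1$ while $\mathrm{rel}(\For(\cM)) = 0$, the surfaces in $\cM$ carry exactly one free marked point (additional periodic marked points might be present but do not affect the argument). The strategy is to exhibit a cylinder configuration on a surface in $\cM$ that violates cylinder rigidity. First I would select a horizontally periodic surface $\For(X,\omega) \in \For(\cM)$ whose horizontal decomposition consists of at least two cylinders $D_1, \ldots, D_k$ with $k \geq 2$; such a direction exists on every non-arithmetic Teichm\"uller curve in genus two, as is visible in the Weierstrass surfaces and their analogues. Placing the free marked point generically in the interior of $D_1$ splits it into two cylinders $D_1'$ and $D_1''$, so $(X,\omega) \in \cM$ has horizontal cylinders $\{D_1', D_1'', D_2, \ldots, D_k\}$ forming a single Wright equivalence class.

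The heart of the argument is then a linear-algebra computation showing that no valid subequivalence partition of this equivalence class exists. Because $h(D_1')$ and $h(D_1'')$ vary independently as the marked point is translated vertically, while $h(D_2), \ldots, h(D_k)$ are rigidly proportional via the Veech structure of $\For(\cM)$, neither $D_1'$ nor $D_1''$ can share a subequivalence class with any other cylinder, so each must form its own singleton. But the standard shear $h(D_1')\gamma_{D_1'}^*$ of the singleton $\{D_1'\}$ must then lie in $\mathrm{Twist}_\cM(\bfC)$, which I expect to be the two-real-dimensional subspace spanned by $\sigma_\bfC = \sum_C h(C)\,\gamma_C^*$ (the horocycle direction lifted from $\For(\cM)$) and by $\gamma_{D_1'}^* - \gamma_{D_1''}^*$ (the real rel direction corresponding to horizontally displacing the marked point). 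Writing $\gamma_{D_1'}^* = a\,\sigma_\bfC + b(\gamma_{D_1'}^* - \gamma_{D_1''}^*)$ and matching coefficients of $\gamma_{D_i}^*$ for $i \geq 2$ forces $a=0$, and then the remaining two coefficient equations become inconsistent. This contradicts cylinder rigidity and rules out the Teichm\"uller curve case, which finishes the reduction to Lemma~\ref{L:AlmostNoCylinderRigid1}.

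The main obstacle I expect is the careful identification of $\mathrm{Twist}_\cM(\bfC)$: specifically, verifying that the only real direction in $T\cM$ lying in the span of the horizontal cylinder duals, aside from the horocycle shear, is exactly $\gamma_{D_1'}^* - \gamma_{D_1''}^*$. This requires tracking how the two-complex-dimensional absolute tangent space of $T\For(\cM)$ interacts with the one-complex-dimensional rel direction of $T\cM$ coming from the free marked point, and splitting each into real and imaginary parts. A secondary point to check is that every non-arithmetic Teichm\"uller curve in genus two admits a Veech surface with a multi-cylinder periodic direction, which is standard but needs to be stated.
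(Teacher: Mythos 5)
Your proposal has a genuine gap: it proves strictly less than the statement. Lemma~\ref{L:AlmostNoCylinderRigid1} concludes only that $\For(\cM)$ is the golden eigenform locus and that \emph{a} golden point is marked; it does not rule out additional marked points. Since the only periodic points on these surfaces are the Weierstrass points and the two golden points $p, p'$ (exchanged by the hyperelliptic involution), after your reduction $\cM$ could a priori still carry any subset of these as extra marked points, and each such $\cM$ is a different rank one rel one subvariety. The paper's proof of this lemma is devoted almost entirely to excluding these possibilities, and the arguments are not formal: one moves $p$ toward $p'$ (or toward a marked Weierstrass point in the same cylinder) until they collide, observes that the degeneration forces all surviving cylinders into a single subequivalence class, and derives a contradiction with the existence of rel in the twist space; for a marked Weierstrass point \emph{not} in the cylinder containing $p$, one degenerates to the decagon locus and uses its rotational symmetry to transport the marked Weierstrass point into the forbidden cylinder, reducing to the first case. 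None of this appears in your proposal, so as written it only establishes ``golden eigenform locus with at least one golden point marked,'' not ``exactly one golden point and no other marked points.'' The stronger conclusion is what the rest of the paper actually uses (e.g.\ in Proposition~\ref{P:GenusTwoCylinderRigid} and Section~\ref{S:Rk2Rel0}).

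On the positive side, the part you do carry out --- ruling out $\For(\cM)$ being a non-arithmetic Teichm\"uller curve, i.e.\ showing a free marked point is incompatible with cylinder rigidity via the twist-space computation with $\sigma_\bfC$ and $\gamma_{D_1'}^* - \gamma_{D_1''}^*$ --- is sound and is a case the paper passes over silently when it invokes Lemma~\ref{L:AlmostNoCylinderRigid1}; making it explicit is a reasonable addition. But it is the easier half of the lemma, and the missing half is the substantive one.
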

\begin{proof}
By Lemma \ref{L:AlmostNoCylinderRigid1}, $\cM$ is the golden eigenform locus and at least one golden point $p$ is marked. Let $(X, \omega)$ be the surface in Figure~\ref{F:GoldenCylConfig}, and let $P$ be the points that are marked in addition to the golden point $p$. $P$ consists of a subset of Weierstrass points and possibly also the image $p'$ of $p$ under the hyperelliptic involution.  Let $C$ be the bottom-most horizontal cylinder on $\For(X, \omega)$, which contains $p$. Suppose in order to derive a contradiction that $P$ contains either $p'$ or a Weierstrass point $w$ in $C$. Moving $p$ and $p'$ (resp. $p$ and $w$) towards each other in $C$ remains in $\cM$ and causes the points to collide. Since this deformation causes a horizontal cylinder to have its height go to zero, the cylinders that remain at the end of this deformation are all subequivalent. But this yields the contradiction that all cylinders on $(X, \omega, P)$ that are contained in the top-most or middle cylinders of $(X, \omega)$ are subequivalent (a linear combination of standard shears in subequivalence classes cannot be rel if this is the case). We have shown that $P$ does not contain $p'$ or the two Weierstrass points in $C$. Suppose in order to deduce a contradiction that $P$ contains a Weierstrass point, which will necessarily be one that is not in $C$. As before, move $p$ to a Weierstrass point $w$ in the interior of $C$ (this time $w$ is not marked), and let $w'$ be the Weierstrass point in $C$ that is different from $w$. This passes to the decagon locus where $p$ is the center of the decagon. Using the rotational symmetry of the decagon, any Weierstrass point not at the center of the decagon can be moved to any other. Therefore, we may move the marked Weierstrass point to $w'$. Moving $p$ off of $w$ and back to its original position in $\For(X, \omega)$, we have the contradiction that $p$ and a Weierstrass point in $C$, i.e., $w'$, are both marked. Therefore, $p$ is the only marked point, as desired.
%
%
\end{proof}


In the sequel, we will say that a subequivalence class $\bfC$ is in the \emph{golden configuration} if it contains two cylinders of equal moduli, one of which is simple, which share a boundary saddle connection and so that their ratio of circumferences is $\phi$ (see the cylinders labelled $\bfB$ in Figure \ref{F:GoldenCylConfig}). Note that the golden configuration describes how a subset of parallel cylinders are arranged. Since it does not describe how \emph{all} parallel cylinders in a given direction are arranged, it is not a cylinder configuration.

\begin{proof}[Proof of Proposition \ref{P:GenusTwoCylinderRigid}:] By Lemma \ref{L:AlmostNoCylinderRigid2}, it suffices to classify the cylinder configurations with two cylinder subequivalence classes, $\bfA$ and $\bfB$, in $\cM$, the golden eigenform locus with the golden point marked.  Note that $\cM_\bfA$ is a subset of either $\cH(0) \times \cH(0)$, where one torus is golden-ratio times larger than the other, or in the golden eigenform locus of $\cH(2)$. Since the golden eigenform locus in $\cH(2)$ is a one-cusped Teichm\"uller curve, the only cylinder configuration in that locus is a pair of cylinders in the golden configuration. If $\bfB$ consists of cylinders with disjoint boundary, then it necessarily consists of two simple cylinders and, by considering $\Col_{\bfA}(\bfB)$, it consists of two simple cylinders with one the golden-ratio times larger than the other. If $\bfB$ consists of two cylinders that share a boundary saddle connection, then $\Col_{\bfA}(\bfB)$, and hence $\bfB$, is in the golden configuration. Note that $\bfA$ and $\bfB$ cannot both be in the golden configuration, since this would result in a surface which, after forgetting the marked point, does not belong to the golden eigenform locus. Therefore, without loss of generality, $\bfA$ consists of two disjoint simple cylinders with one cylinder the golden-ratio times larger than the other, and $\bfB$ is in the golden configuration. So $\bfA$ and $\bfB$ are in the cylinder configuration shown in Figure \ref{F:GoldenCylConfig}.
%
\end{proof}

\section{Proof of Theorem \ref{T:main-tool}}\label{S:Rk2Rel0}

Suppose that $\cM$ is a rank two rel zero algebraically primitive invariant subvariety consisting of genus four surfaces. Note that we permit the surfaces in $\cM$ to have marked points. By Apisa-Wright \cite[Lemma 3.31]{ApisaWrightDiamonds}, we may find a surface $(X, \omega) \in \cM$ with two generic equivalence classes of cylinders $\bfC_1$ and $\bfC_2$ that do not intersect and which share no boundary saddle connections. Without loss of generality, $\bfC_1$ (resp. $\bfC_2$) consists of vertical (resp. horizontal) cylinders. We will use the following facts:
\begin{itemize}
    \item $\cM_{\bfC_i}$ is cylinder rigid for $i \in \{1,2\}$ by Apisa \cite[Lemma 4.2 and Proposition 4.12]{Apisa-Lyapunov},
    \item $\cM_{\bfC_i}$ is algebraically primitive for $i \in \{1,2\}$ by Lemma \ref{lemma:genusbytwo}.
\end{itemize}

\begin{lemma}\label{L:OneConnected}
At least one of $\cM_{\bfC_1}$ and $\cM_{\bfC_2}$ is connected. If $\cM_{\bfC_i}$ is connected, then it is the golden eigenform locus with one golden point marked.
\end{lemma}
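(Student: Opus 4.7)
The plan is to compute the invariants of $\cM_{\bfC_i}$, identify it via Proposition \ref{P:GenusTwoCylinderRigid} when connected, and rule out the case in which both $\cM_{\bfC_1}$ and $\cM_{\bfC_2}$ are disconnected.

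Using Lemma \ref{lemma:genusbytwo}, together with the assumption that $\cM$ is rel zero (so $\bfC_i$ is not involved with rel) and has quadratic field of definition $k(\cM)$, we obtain that $\cM_{\bfC_i}$ is algebraically primitive with field of definition $k(\cM)$, consists of (possibly disconnected) surfaces of total genus $g - d = 2$, and has $\rank(\cM_{\bfC_i}) = \rank(\cM) - 1 = 1$. Since $\dim(\cM_{\bfC_i}) = \dim(\cM) - 1 = 3$, the rel is one. Combined with the two bullet points above, $\cM_{\bfC_i}$ is a cylinder rigid, algebraically primitive, rank one, rel one invariant subvariety of quadratic field of definition. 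If $\cM_{\bfC_i}$ is connected, it is nonarithmetic (its field of definition is not $\bQ$), so Proposition \ref{P:GenusTwoCylinderRigid} identifies it as the golden eigenform locus with exactly one golden point marked, establishing the second claim of the lemma.

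It remains to prove that at least one of $\cM_{\bfC_1}$ and $\cM_{\bfC_2}$ is connected. Suppose for contradiction that both are disconnected. By Lemma \ref{lemma:ConnCompsDeg}, each has exactly two connected components, and since each component has genus at least $\rank(\cM_{\bfC_i}) = 1$ with total genus two, each component is a genus one surface with marked points. Following the proof of Lemma \ref{lemma:ConnCompsDeg}, the projection of $\cM_{\bfC_i}$ onto each component is surjective onto a full component of a torus stratum. The rank-one, quadratic-field-of-definition, algebraically primitive structure on a subvariety of a product of two torus strata then forces $p(T\cM_{\bfC_i})$ to be the graph of a $k(\cM)$-linear isomorphism between the absolute cohomologies of the two tori, so the two tori are Galois conjugates.

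The main obstacle is to derive a contradiction from the existence of two such Galois-conjugate-torus degenerations simultaneously. The approach I would pursue leverages the fact that, since $\bfC_1$ and $\bfC_2$ have disjoint cylinders with disjoint boundaries on $(X, \omega)$, each $\bfC_j$ persists as an equivalence class of cylinders on the boundary of the other $\cM_{\bfC_i}$-degeneration, partitioned between the two Galois-conjugate tori. The Galois symmetry forces a precise compatibility between the cylinders of $\bfC_1$ and $\bfC_2$ lying on each torus component and their Galois conjugates on the paired component. Combining this two-sided Galois compatibility with the cylinder rigidity of both $\cM_{\bfC_1}$ and $\cM_{\bfC_2}$, I expect to exhibit two non-subequivalent cylinders on $(X, \omega)$ whose ratio of circumferences is forced to be rational, which by Wright's cylinder deformation theorem would imply that $\cM$ has rational field of definition, contradicting the assumption that $k(\cM)$ is quadratic.
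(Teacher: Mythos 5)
Your treatment of the second claim and of the invariants of $\cM_{\bfC_i}$ (algebraically primitive, rank one, rel one, cylinder rigid, quadratic field of definition, total genus two) is correct and matches the paper; the identification with the golden eigenform locus via Proposition \ref{P:GenusTwoCylinderRigid} is exactly the intended step. The problem is the first claim, which is the substance of the lemma: you never actually derive the contradiction when both $\cM_{\bfC_1}$ and $\cM_{\bfC_2}$ are disconnected. The final paragraph is a statement of intent --- ``the Galois symmetry forces a precise compatibility,'' ``I expect to exhibit two non-subequivalent cylinders \dots whose ratio of circumferences is forced to be rational'' --- with no concrete mechanism identified. It is not specified which cylinders these would be, why cylinder rigidity and the Galois-conjugate torus structure would pin down their circumference ratio, or even what the ``two-sided Galois compatibility'' says as a precise constraint on periods. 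As written, this is a gap, not a proof. (A smaller point: the surjectivity-onto-components conclusion you cite from Lemma \ref{lemma:ConnCompsDeg} is only stated there under the hypothesis $\mathrm{rank}(\cM_v) > 1$, which fails here; the fact that $\cM_{\bfC_i}$ lies in a product of two genus one strata instead follows from the genus count of Lemma \ref{lemma:genusbytwo} together with primality.)

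The paper's argument avoids Galois considerations entirely and is purely topological. Assuming $\cM_{\bfC_2}$ is disconnected, one uses cylinder rigidity to find a second vertical subequivalence class: there is an equivalence class $\bfC_3$ of vertical cylinders on $(X,\omega)$ so that $\Col_{\bfC_2}(\bfC_1)$ and $\Col_{\bfC_2}(\bfC_3)$ are the two vertical subequivalence classes on the two-component surface $\Col_{\bfC_2}(X,\omega)$. Connectedness of $(X,\omega)$ produces a cylinder $C \in \bfC_2$ and cylinders $A, B \in \bfC_3$ with $\Col_{\bfC_2}(A)$ and $\Col_{\bfC_2}(B)$ on different components but with $C$ crossing both $A$ and $B$. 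Then $\Col_{\bfC_1,\bfC_2}(A)$ and $\Col_{\bfC_1,\bfC_2}(B)$ lie on different components; if $\Col_{\bfC_1}(X,\omega)$ were also disconnected, $\Col_{\bfC_1}(A)$ and $\Col_{\bfC_1}(B)$ would have to lie on different components of it, which is impossible because both still meet the surviving cylinder $\Col_{\bfC_1}(C)$. You should either carry out an argument of this kind or make your Galois-compatibility strategy fully explicit; in its current form the central step of the lemma is missing.
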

\begin{proof}
The second claim is immediate from Proposition \ref{P:GenusTwoCylinderRigid} so we will concentrate on the first claim. Suppose that $\cM_{\bfC_2}$ is disconnected. By Lemma \ref{lemma:genusbytwo}, $\cM_{\bfC_2}$ belongs to a product of two genus one strata. Since $\cM_{\bfC_2}$ is rank one rel one and cylinder rigid, $\Col_{\bfC_2}(X, \omega)$ has two subequivalence classes of vertical cylinders. Let $\bfC_3$ be the equivalence class of vertical cylinders on $(X, \omega)$ so that $\Col_{\bfC_2}(\bfC_1)$ and $\Col_{\bfC_2}(\bfC_3)$ are the two vertical subequivalence classes of cylinders on $\Col_{\bfC_2}(X, \omega)$. Since $(X, \omega)$ is connected, there is a cylinder $C \in \bfC_2$ and two cylinders $A, B \in \bfC_3$ so that $\Col_{\bfC_2}(A)$ and $\Col_{\bfC_2}(B)$ belong to different components and $C$ intersects both $A$ and $B$. (Note that if no such triple of cylinders existed, then $(X, \omega)$ would be disconnected.) The surface $\Col_{\bfC_1, \bfC_2}(X, \omega)$ has two components and $\Col_{\bfC_1, \bfC_2}(A)$ and $\Col_{\bfC_1, \bfC_2}(B)$ are found on different components. If $\Col_{\bfC_1}(X, \omega)$ had two components, then $\Col_{\bfC_1}(A)$ and $\Col_{\bfC_1}(B)$ would be found on different components; however this is not the case since they both intersect $\Col_{\bfC_1}(C)$.  
%
%
%
\end{proof}

In light of Lemma \ref{L:OneConnected}, we may suppose without loss of generality that $\cM_{\bfC_1}$ is connected.


\begin{lemma}\label{L:DisconnectedRank2Rel0Case}
If $\cM_{\bfC_2}$ is disconnected, then $\cM$ is the $(1,1,1,7)$-locus. Moreover, $\bfC_1$ is a pair of simple cylinders and $\bfC_2$ is a pair of cylinders in the golden configuration. 
\end{lemma}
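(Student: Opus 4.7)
Plan. The plan is to use both boundary degenerations to pin down the cylinder combinatorics of $(X,\omega)$ and then to recognize $(X,\omega)$ as a surface in the $(1,1,1,7)$-locus. Two pieces of input are available: from Lemma \ref{L:OneConnected}, $\cM_{\bfC_1}$ is the golden eigenform locus with one golden point marked; from the proof of that lemma (using cylinder rigidity of $\cM_{\bfC_2}$), the disconnectedness of $\cM_{\bfC_2}$ provides an additional equivalence class $\bfC_3$ of vertical cylinders on $(X,\omega)$ such that $\Col_{\bfC_2}(\bfC_1)$ and $\Col_{\bfC_2}(\bfC_3)$ are the vertical subequivalence classes on the two genus-one components of $\Col_{\bfC_2}(X,\omega)$.

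First I would apply Proposition \ref{P:GenusTwoCylinderRigid} to $\Col_{\bfC_1}(X,\omega)$, after shearing so that $(X,\omega)$ is horizontally periodic. One must check that the horizontal direction on $\Col_{\bfC_1}(X,\omega)$ carries two subequivalence classes; this follows from non-arithmeticity of $\cM$ together with Wright's cylinder deformation theorem, which forbids all horizontal circumferences from being commensurable, combined with the fact that cylinders in a fixed subequivalence class have commensurable circumferences. Proposition \ref{P:GenusTwoCylinderRigid} then identifies the configuration as the one in Figure \ref{F:GoldenCylConfig}, with one subequivalence class $\bfA$ of two disjoint simple cylinders with golden ratio of moduli and one subequivalence class $\bfB$ in the golden configuration.

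Next I would identify $\Col_{\bfC_1}(\bfC_2)$ with $\bfB$ by comparing the two commuting collapses: $\Col_{\bfC_1,\bfC_2}(X,\omega)$ may be computed either by collapsing $\bfC_2$ on $\Col_{\bfC_1}(X,\omega)$ (i.e., collapsing $\bfA$ or $\bfB$) or by collapsing the remaining vertical cylinders $\Col_{\bfC_2}(\bfC_1)$ and $\Col_{\bfC_2}(\bfC_3)$ on the two-torus surface $\Col_{\bfC_2}(X,\omega)$. Only collapsing $\bfB$ on the golden-eigenform surface is topologically compatible with the latter degeneration of two tori into a lower-dimensional configuration. Hence $\bfC_2$ is in the golden configuration. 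The identification of $\bfC_1$ as a pair of simple cylinders then follows by examining $\Col_{\bfC_2}(X,\omega)$: each torus component contains a single vertical subequivalence class (and cylinders on a torus are automatically simple), and since $\bfC_1$ and $\bfC_2$ have disjoint boundaries, simpleness of each $\bfC_1$-cylinder is preserved when $\bfC_2$ is reinstated on $(X,\omega)$.

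Finally, the pinned-down cylinder decomposition determines $(X,\omega)$ up to the $\mathrm{GL}(2,\bR)$-action, and I would match it with an explicit surface in the $(1,1,1,7)$-locus described in \cite{EskinMcMullenMukamelWright}. Since $\cM$ is then a rank-two rel-zero algebraically primitive invariant subvariety with quadratic field of definition containing a $(1,1,1,7)$-surface, and the $(1,1,1,7)$-locus is a closed $\mathrm{GL}(2,\bR)$-orbit closure of the same dimension $4 = 2 \cdot \mathrm{rank} + \mathrm{rel}$, a dimension comparison yields $\cM$ equals the $(1,1,1,7)$-locus. The main obstacle is this last step, the explicit identification of the reconstructed $(X,\omega)$ with a $(1,1,1,7)$-surface; this likely requires careful period computations exploiting the $\bQ(\sqrt{5})$-module structure inherited from algebraic primitivity, and ensuring that the chosen $(X,\omega)$ is sufficiently generic in $\cM$ that its $\mathrm{GL}(2,\bR)$-orbit closure fills all of $\cM$.
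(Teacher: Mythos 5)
Your overall strategy matches the paper's: apply Proposition \ref{P:GenusTwoCylinderRigid} to $\Col_{\bfC_1}(X,\omega)$, use the disconnectedness of $\cM_{\bfC_2}$ to force $\Col_{\bfC_1}(\bfC_2)$ into the golden configuration, and then reconstruct $(X,\omega)$ by regluing $\bfC_1$ into $\Col_{\bfC_1}(X,\omega)$. However, there is a genuine gap in your argument that $\bfC_1$ is a \emph{pair} of simple cylinders. The genus-one components of $\Col_{\bfC_2}(X,\omega)$ may carry marked points, so it is false that cylinders there are automatically simple, and more importantly nothing in your argument bounds the number of cylinders of $\Col_{\bfC_2}(\bfC_1)$ on each component. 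The paper must do real work here: it first shows (using that each component carries exactly two vertical subequivalence classes, the other being $\Col_{\bfC_2}(\bfC_3)$, which from Figure \ref{F:example:sub1} is a single simple cylinder per component) that there are at most two cylinders of $\Col_{\bfC_2}(\bfC_1)$ per component and that they are simple; it then excludes the two-cylinder case by noting it forces a marked point between them, and a Dehn twist in $\For(\bfC_1)$ moves that point so that $\Col_{\bfC_1}(X,\omega)$ acquires an extra vertical saddle connection, contradicting Proposition \ref{P:GenusTwoCylinderRigid}. Without this step your reconstruction is not forced, and the configuration you would need to exclude leads to a contradiction rather than to another $(1,1,1,7)$-surface, so it cannot be waved away.

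On the final step, which you flag as the main obstacle: the paper avoids period computations. Once $\Col_{\bfC_2}(\bfC_1)$ is known to be one simple cylinder per component, the complement of $\Col_{\bfC_1}(\bfC_2)$ in $\Col_{\bfC_1}(X,\omega)$ (Figure \ref{F:example:sub1}) contains a \emph{unique} pair of vertical saddle connections into which these cylinders can be glued, so the local structure of $\cM$ near $(X,\omega)$ is completely determined by the boundary data and is recognized as the $(1,1,1,7)$-locus; equality of the two four-dimensional invariant subvarieties then follows as you suggest. Your worry about choosing $(X,\omega)$ generic in $\cM$ is not the issue; the issue is pinning down the combinatorics tightly enough that only one gluing is possible, which is exactly what the missing case analysis provides.
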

\begin{proof}
By Proposition \ref{P:GenusTwoCylinderRigid}, $\Col_{\bfC_1}(\bfC_2)$ is either a pair of simple cylinders or a pair of cylinders in the golden configuration. If $\cM_{\bfC_2}$ is disconnected, then the second case is the only possible one. Since $\Col_{\bfC_1}(\bfC_1)$ consists of vertical saddle connections in the complement of the horizontal cylinders $\Col_{\bfC_1}(\bfC_2)$, we see that $\Col_{\bfC_1}(\bfC_2)$ is as pictured in Figure \ref{F:example:sub1}.

\begin{figure}[H]
\centering
\begin{subfigure}{.35\textwidth}
  \centering
  \includegraphics[width=.8\linewidth]{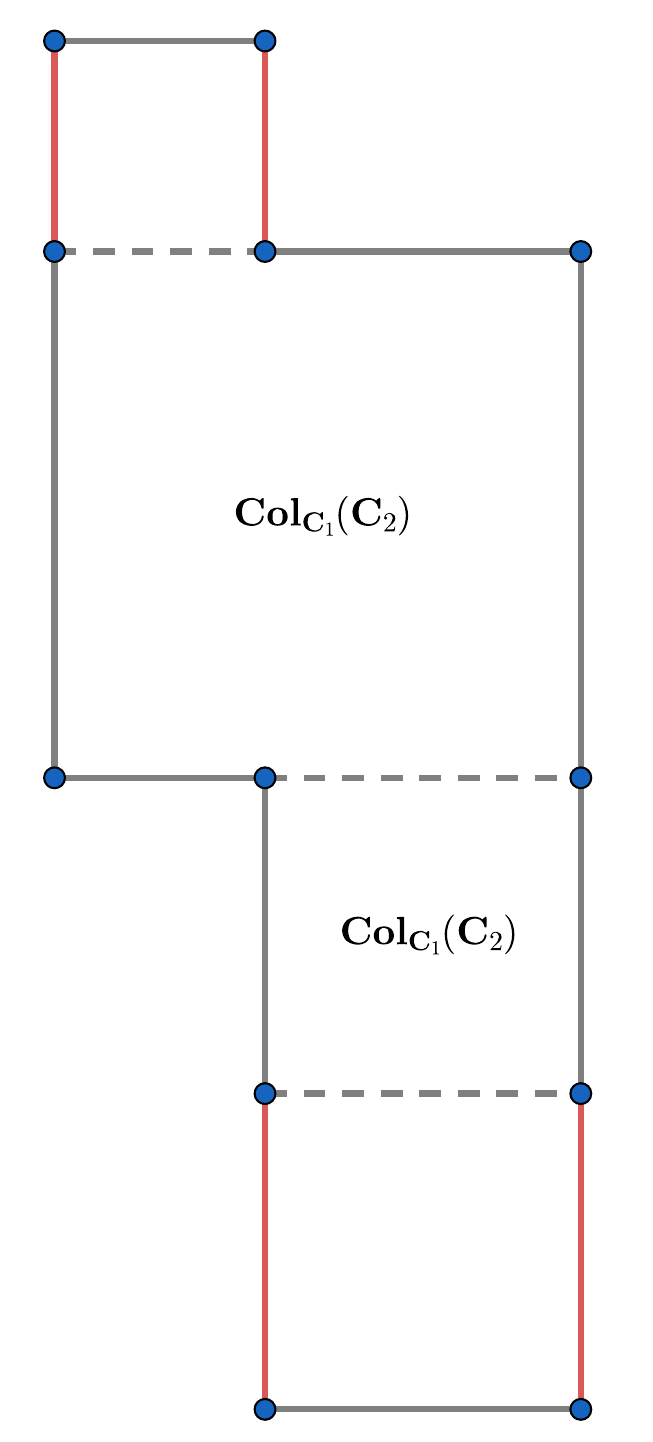}
  \caption{The surface $\Col_{\bfC_1}(X, \omega)$ when $\Col_{\bfC_2}(X, \omega)$ is disconnected.}
  \label{F:example:sub1}
\end{subfigure}
\hspace{.5cm}
\begin{subfigure}{.55\textwidth}
  \centering
  \includegraphics[width=.8\linewidth]{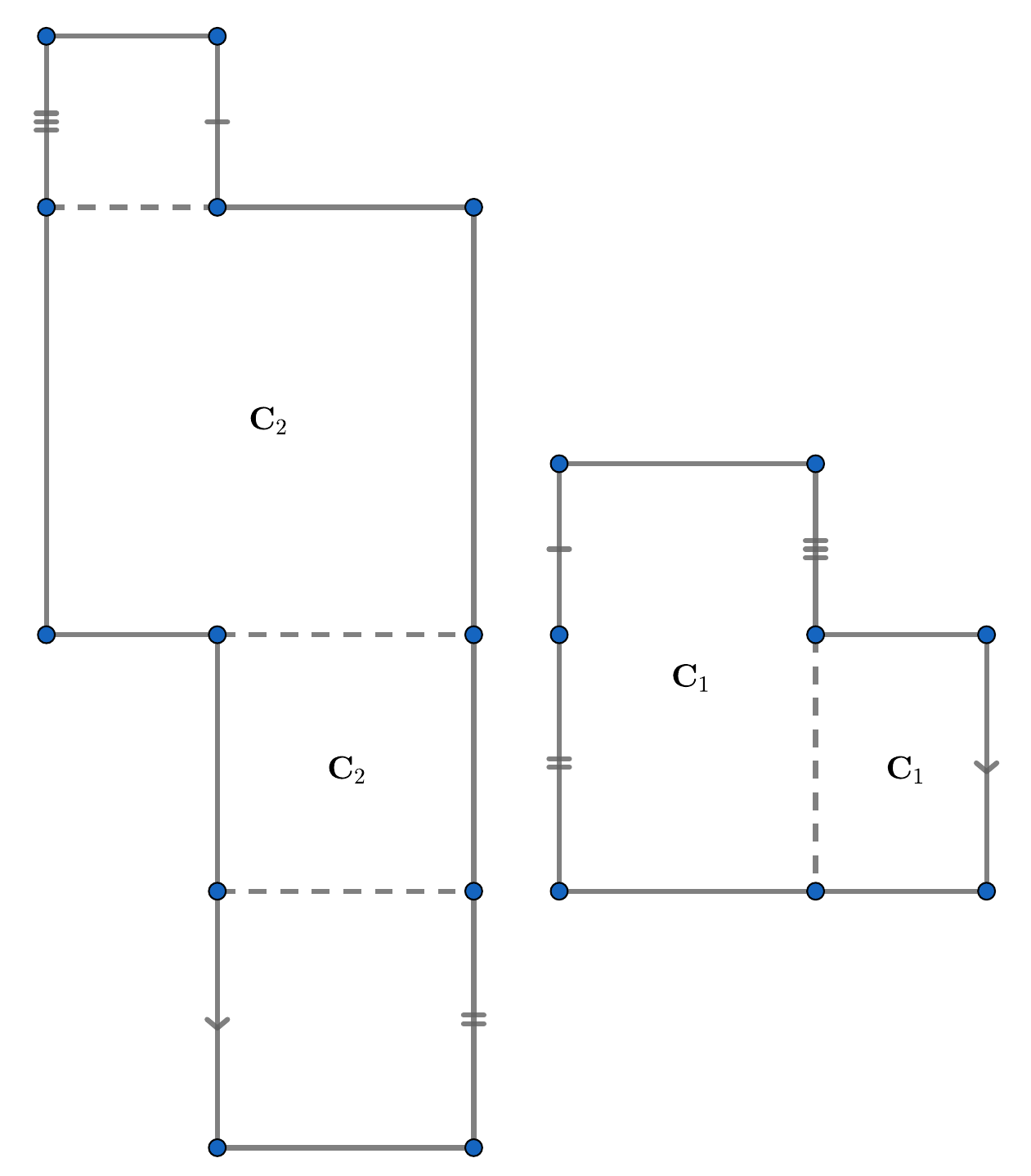}
  \caption{The surface $\Col_{\bfC_1}(X, \omega)$ when $\Col_{\bfC_2}(X, \omega)$ is connected and $\Col_{\bfC_1}(\bfC_2)$ is in the golden configuration. The surface is also potentially the mirror image of this one.}
  \label{F:example:sub2}
\end{subfigure}
\caption{These are the two degenerations that occur when there is a generic equivalence class in the golden configuration.}
\label{F:example}
\end{figure}

We see therefore that $\cM_{\bfC_2}$ consists of surfaces made up of two genus one components (note that each component may have marked points). On each component there are two subequivalence classes of vertical cylinders, one of which is $\Col_{\bfC_2}(\bfC_1)$. From Figure \ref{F:example:sub1}, we see that the second subequivalence class consists of one simple cylinder on each component. Since every vertical subequivalence class must border a cylinder in a different vertical subequivalence class, there are at most two cylinders in $\Col_{\bfC_2}(\bfC_1)$ on each component of $\Col_{\bfC_2}(X, \omega)$ and these are necessarily simple. If there is a single cylinder in $\Col_{\bfC_2}(\bfC_1)$ on each component, then gluing these cylinders into the unique pair of vertical saddle connections in the complement of $\Col_{\bfC_1}(\bfC_2)$ on $\Col_{\bfC_1}(X, \omega)$ produces a surface in the $(1,1,1,7)$-locus. 

We will therefore suppose in order to deduce a contradiction that there are two cylinders in $\Col_{\bfC_2}(\bfC_1)$ on a single component of $\Col_{\bfC_2}(X, \omega)$, which means that there is a marked point between them. Shearing these cylinders in order to perform one Dehn twist in the cylinders in $\For(\bfC_1)$ will move this marked point off of a horizontal saddle connection in $\For(\bfC_1)$. However, after shearing, $\Col_{\bfC_1}(X, \omega)$ has an extra vertical saddle connection (since the marked point no longer coincides with a zero). This contradicts Proposition \ref{P:GenusTwoCylinderRigid}.     
\end{proof}

\begin{lemma}\label{L:ConnectedRank2Rel0Case}
After perhaps applying the standard shear to the cylinders in $\bfC_2$, $\cM_{\bfC_2}$ is disconnected.
\end{lemma}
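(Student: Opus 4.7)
My plan is to suppose for contradiction that $\cM_{\bfC_2}$ remains connected under all standard shears of $\bfC_2$, use Proposition \ref{P:GenusTwoCylinderRigid} to derive a rigid description of $(X, \omega)$, and then identify an explicit shear that forces the $\bfC_2$-collapse into a disconnected boundary component of $\cM$. By the running WLOG that $\cM_{\bfC_1}$ is connected, together with Lemma \ref{L:OneConnected} and Proposition \ref{P:GenusTwoCylinderRigid}, both $\cM_{\bfC_1}$ and $\cM_{\bfC_2}$ are the golden eigenform locus with one golden point marked. In particular, $\Col_{\bfC_1}(X, \omega)$ admits the cylinder decomposition of Figure \ref{F:GoldenCylConfig} with horizontal subequivalence classes $\bfA$ (the disjoint simple pair with circumference ratio $\phi$) and $\bfB$ (the golden configuration pair), and $\Col_{\bfC_1}(\bfC_2)$ is one of these two.

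The connectedness of $\cM_{\bfC_2}$ should pin down $(X, \omega)$ as Figure \ref{F:example:sub2} up to reflection: I would argue $\Col_{\bfC_1}(\bfC_2) = \bfB$ and that the vertical saddle connections arising from the collapse of $\bfC_1$ lie on the ``outer'' rather than on the ``inner shared'' boundary of $\bfB$. The first claim follows (in combination with the symmetric statement for $\Col_{\bfC_2}(\bfC_1)$) by checking that if $\Col_{\bfC_1}(\bfC_2) = \bfA$, then $(X, \omega)$ must factor through a product structure incompatible with connectedness of $(X, \omega)$; the second by noting that placement of vertical saddle connections on the inner shared boundary would already put us in the configuration of Figure \ref{F:example:sub1}, contradicting the hypothesis. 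Having nailed down this structure, I would then apply the standard shear $t\sigma_{\bfC_2}$, which by \cite{WrightCylDef} stays in $\cM$ and which horizontally translates the top of each cylinder in $\bfC_2$ relative to its bottom. Using the explicit parameters $h_1, h_2, \ell$ of Figure \ref{F:GoldenCylConfig} together with the golden-ratio relations on cylinder circumferences, I would choose $t$ so that the endpoints of the vertical saddle connections coming from $\bfC_1$ align on the shared boundary of $\bfB$ inside $\Col_{\bfC_1}$, so that on the sheared surface $(X', \omega') = (X, \omega) + t \sigma_{\bfC_2}$ the $\bfC_2$-collapse realizes Figure \ref{F:example:sub1} and is therefore disconnected.

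The main obstacle I expect is the explicit geometric verification that the required shear value $t$ exists (in particular, that the alignment of vertical saddle connections is compatible with the real-multiplication-imposed golden-ratio relations among cylinder parameters). I would handle this by direct computation in the parameters of Figure \ref{F:GoldenCylConfig}, exploiting that the heights and widths on both $(X, \omega)$ and $\Col_{\bfC_1}(X, \omega)$ are locked by the eigenform structure. The reflected version of Figure \ref{F:example:sub2} is handled symmetrically by replacing $t \sigma_{\bfC_2}$ with $-t \sigma_{\bfC_2}$.
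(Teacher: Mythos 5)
There is a genuine gap, and it lies exactly where the bulk of the paper's argument is. Your dismissal of the case $\Col_{\bfC_1}(\bfC_2) = \bfA$ (i.e., $\bfC_2$ a pair of disjoint simple cylinders) via an alleged ``product structure incompatible with connectedness'' is not correct: two disjoint simple cylinders on a connected genus four surface force no product structure at all, and in fact the paper shows this is the case that actually survives the preliminary analysis. The paper's first step proves the opposite of what you assert --- namely that \emph{no} generic equivalence class can be in the golden configuration under the standing assumption (if $\bfC_2$ were in the golden configuration, then so would $\bfC_1$ have to be, placing $(X,\omega)$ in $\cH(2,4)$ as in Figure \ref{F:example:sub2}, and one then shears to produce a pair of simple cylinders whose collapse cannot land in $\cH(1,1,0)$, a contradiction). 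So the surface you ``pin down,'' Figure \ref{F:example:sub2}, is one the paper proves cannot exist, and your subsequent plan to shear it is moot.

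What remains after that reduction is the case where every generic equivalence class is a pair of disjoint simple cylinders with circumference ratio $\phi$; here $\Col_{\bfC_1}(\bfC_1)$ is a pair of saddle connections inside the golden-configuration cylinders of $\Col_{\bfC_1}(X,\omega)$, and up to Dehn twists there are three possible directions for them. Your proposal does not engage with this trichotomy at all, and the contradictions it requires are of a different nature from ``find a shear that disconnects'': in the diagonal subcase one exhibits a subequivalence class that is itself in the golden configuration (contradicting the reduction just made), and in the vertical subcase one degenerates to a codimension-two boundary component that would be a nonarithmetic eigenform Teichm\"uller curve in $\cH(2,0)$ carrying a non-Weierstrass periodic point, contradicting M\"oller's classification of periodic points. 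Neither ingredient appears in your outline, so the proof as proposed does not close.
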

\begin{proof}
Suppose not. In particular, since any cylinder equivalence class may be perturbed to form a generic diamond, for any generic cylinder equivalence class $\bfC$, $\cM_{\bfC}$ is connected. 

We will first argue that any generic cylinder equivalence class $\bfC$ consists of two simple cylinders with one golden-ratio larger than the other. Suppose that this is not the case and perturb the surface so that there is a generic equivalence class $\bfD$ of cylinders that are disjoint from those in $\bfC$. By assumption, $\cM_{\bfD}$ consists of connected surfaces, so $\cM_{\bfD}$ is the golden eigenform locus with the golden point marked. In particular, $\bfC$ is in the golden configuration (see Figure \ref{F:example:sub1}; when referring to figures throughout this paragraph, it will be useful to set $\bfC_1 := \bfD$ and $\bfC_2 := \bfC$). Since $\cM_{\bfC}$ is also the golden eigenform locus with a golden point marked, $\Col_{\bfC}(\bfD)$, and hence $\bfD$ consists of either two simple cylinders or two cylinders in the golden configuration. However, $\bfD$ cannot be two simple cylinders since then $\Col_{\bfC}(X, \omega)$ would be disconnected (see Figure \ref{F:example:sub1}). Since $\bfD$ consists of two cylinders in the golden configuration,  $(X, \omega)$ is the surface depicted in Figure \ref{F:example:sub2}, which lies in $\cH(2, 4)$. We derive a contradiction by shearing $\bfC$ to produce an equivalence class $\bfC_3$ consisting of two simple cylinders (see Figure \ref{F:example2:sub2}). Collapsing it must result in the golden eigenform locus with the golden point marked. However, it is not possible to collapse two simple cylinders in $\cH(2, 4)$ and degenerate to $\cH(1,1,0)$.

\begin{figure}[H]
\begin{subfigure}{.55\textwidth}
\centering
  \includegraphics[width=.4\linewidth]{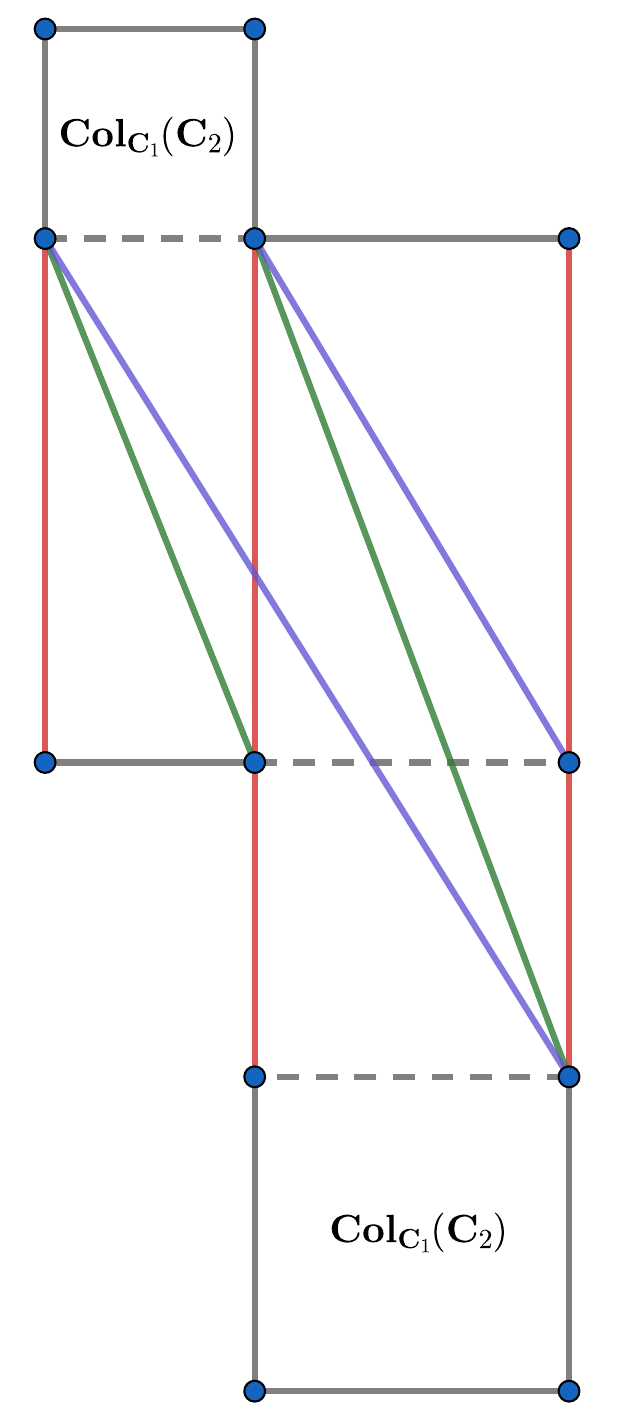}
\caption{$\Col_{\bfC_1}(\bfC_1)$ consists of two parallel saddle connections contained in the pair of horizontal cylinders in the golden configuration.}
  \label{F:example2:sub1}
\end{subfigure}
\hspace{.5cm}
\begin{subfigure}{.4\textwidth}
  \centering
  \includegraphics[width=.9\linewidth]{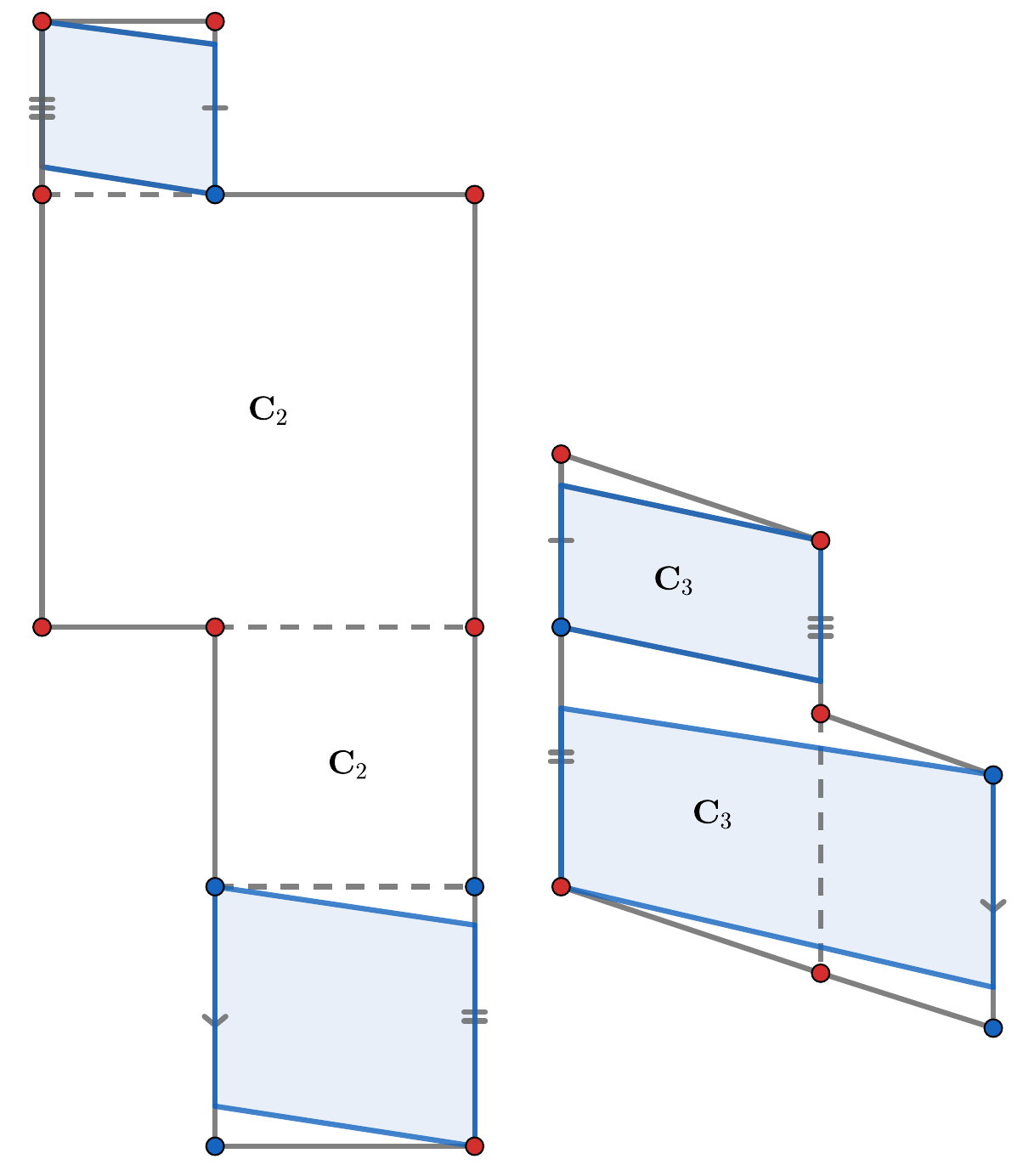}
  \caption{A contradiction.}
  \label{F:example2:sub2}
\end{subfigure}
\caption{The surface in the proof of Lemma \ref{L:ConnectedRank2Rel0Case}. Opposite sides of the polygon are identified unless otherwise specified.}
\label{F:example2}
\end{figure}

Therefore, we now have that every generic equivalence class of cylinders consists of two disjoint simple cylinders, one of which is golden ratio larger than the other. In particular, $\Col_{\bfC_1}(X, \omega)$ is represented in Figure \ref{F:example2:sub1} and $\Col_{\bfC_1}(\bfC_1)$ consists of a pair of saddle connections, with one golden-ratio times longer than the other. Since $\Col_{\bfC_1}(\bfC_2)$ consists of two horizontal simple cylinders, $\Col_{\bfC_1}(\bfC_1)$ is contained in the subequivalence class of horizontal cylinders on $\Col_{\bfC_1}(X, \omega)$ that are  in the golden configuration. Up to Dehn twists in this subequivalence class there are three possible directions for the saddle connections in $\Col_{\bfC_1}(\bfC_1)$ (see Figure \ref{F:example2:sub1}).  

Suppose first that $\Col_{\bfC_1}(\bfC_1)$ consists of a pair of diagonal lines in the horizontal cylinders in the golden configuration in Figure~\ref{F:example2:sub1}. Gluing in a pair of simple cylinders to the indicated saddle connections produces the surfaces in Figure \ref{F:example3}. The subequivalence class consisting of the unshaded (white) and shaded (red) cylinders, which are parallel to the shaded (blue) diagonal cylinders containing $\bfC_2$,  is a subequivalence class in a golden configuration, contradicting our hypothesis that no subequivalence classes were in this configuration.  We note that these subequivalence classes are generic since their boundary saddle connections are $\cM_{\bfC_2}$-parallel on $\Col_{\bfC_2}(X, \omega)$ and hence $\cM$-parallel on $(X, \omega)$ since they do not intersect $\bfC_2$. 
\begin{figure}[H]
\begin{subfigure}{.5\textwidth}
\centering
  \includegraphics[width=.8\linewidth]{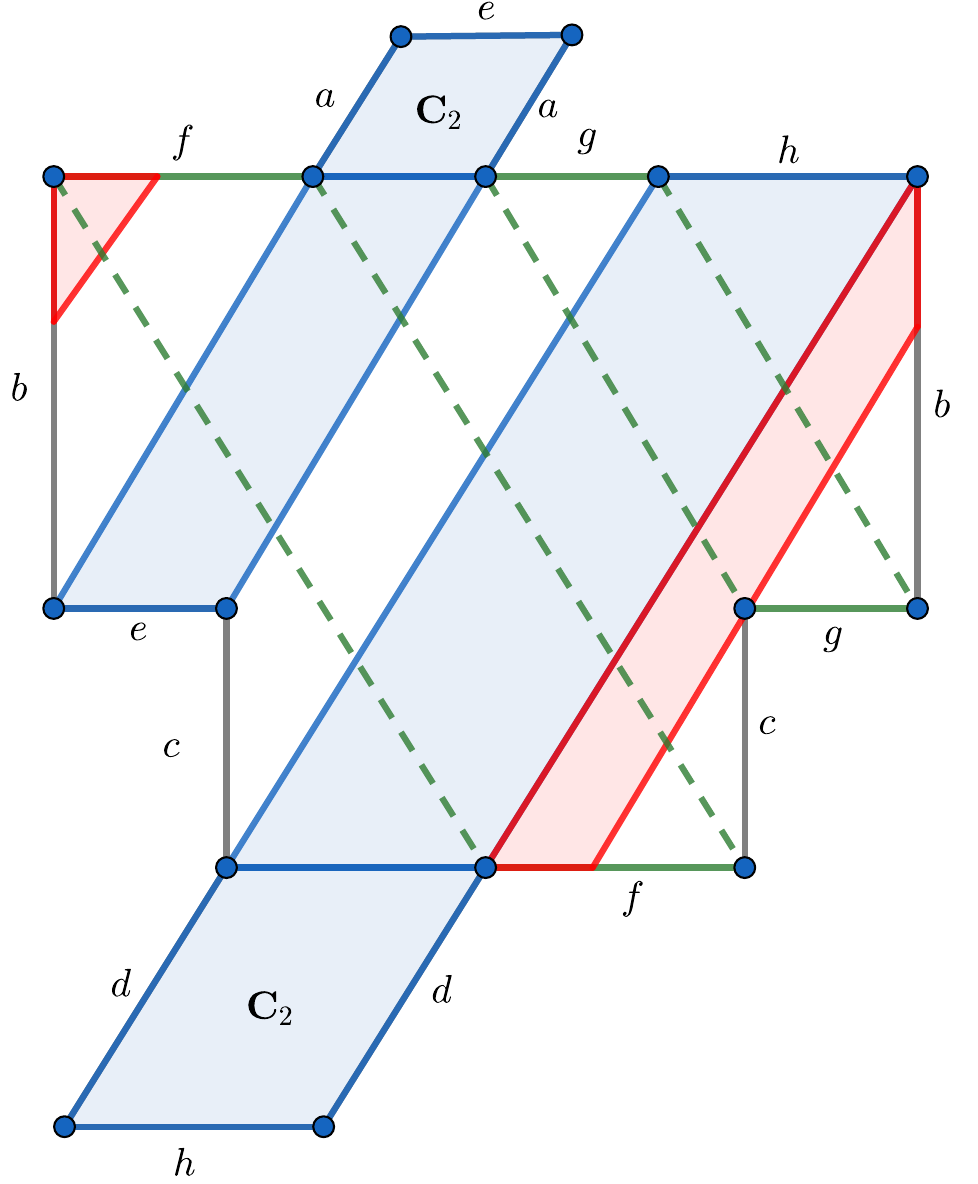}
\caption{$\cM$ is defined by the equations $d = \phi a$, $b = \phi c$, $f = \phi g$, and $h = \phi e$.}
  \label{F:example3:sub1}
\end{subfigure}
\hspace{.5cm}
\begin{subfigure}{.5\textwidth}
  \centering
  \includegraphics[width=.8\linewidth]{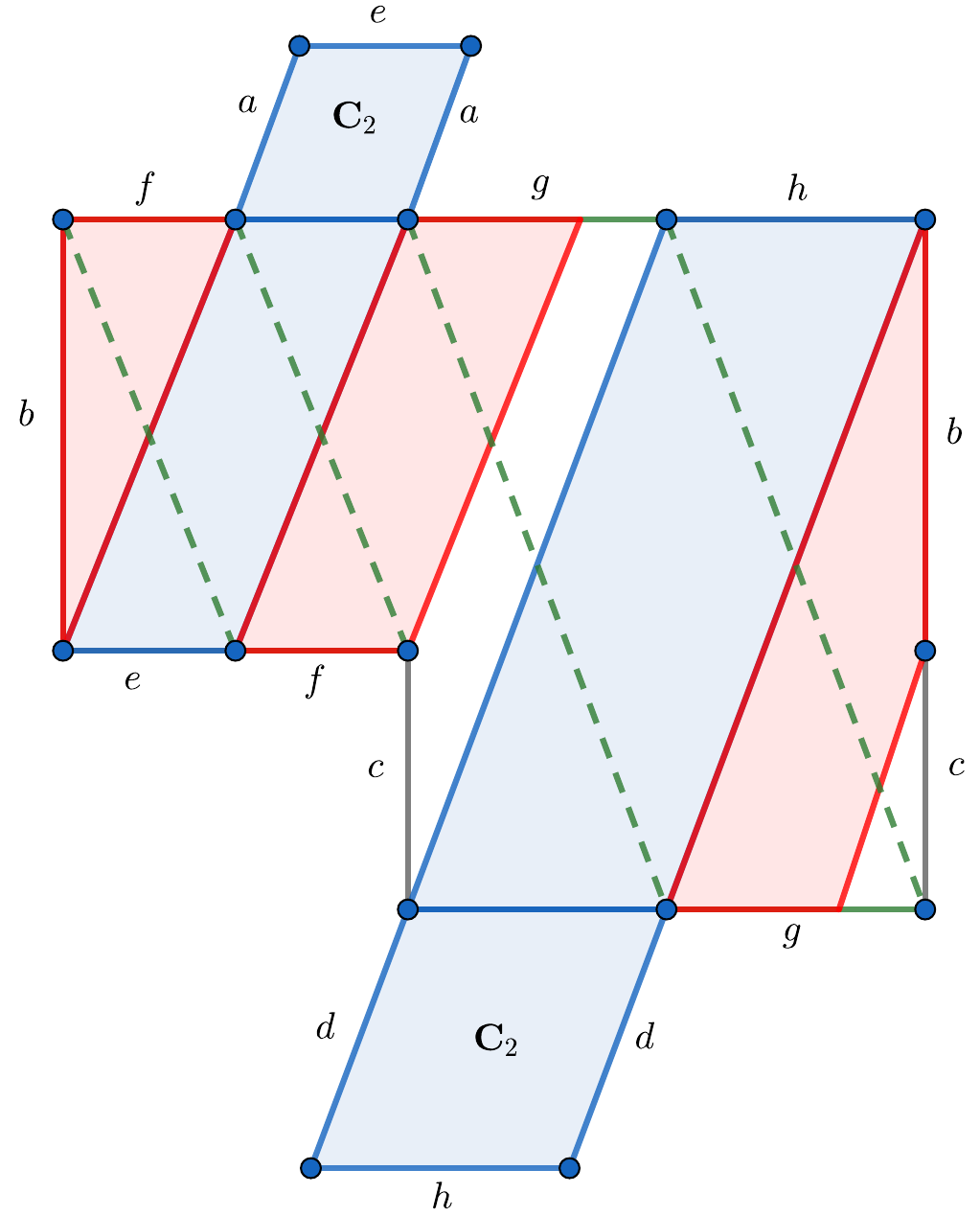}
\caption{$\cM$ is defined by the equations $d = \phi a$, $b = \phi c$, $g = \phi f$, and $h = \phi e$.}  \label{F:example3:sub2}
\end{subfigure}
\caption{The dashed (green) lines are the boundaries of the cylinders in $\bfC_1$. The cylinders in $\bfC_2$ are contained in two shaded (blue) cylinders. There are two more parallel cylinders to the shaded (blue) cylinders - one shaded (red) and one unshaded (white), which form a subequivalence class. }
\label{F:example3}
\end{figure}

Suppose finally that $\Col_{\bfC_1}(\bfC_1)$ consists of a pair of vertical saddle connections in the horizontal cylinders in the golden configuration in Figure \ref{F:example2:sub1}. The surface $(X, \omega)$, up to a reflection about the vertical axis, is depicted in Figure \ref{F:example4:sub1}. Shear the surface after gluing in $\bfC_1$ and then degenerate as in Figure \ref{F:example4:sub2}.
Let $(Y, \eta)$ be the resulting boundary surface. This degeneration occurs under the supposition that $g = \phi f$, where $g$ and $f$ are the indicated saddle connections. Since three non-generically parallel saddle connections vanish, namely the two dashed ones and $b$, $(Y, \eta)$ belongs to a codimension at least two component $\cM'$ of the boundary of $\cM$. In particular, $\cM'$ is a Teichm\"uller curve in a nonarithmetic eigenform locus in $\cH(2, 0)$ with a periodic point marked. By M\"oller \cite{Moller-PeriodicPoints}, the only periodic points on such loci are Weierstrass points. The marked point on $(Y, \eta)$ is not a Weierstrass point, which is a contradiction.



\begin{figure}[H]
\begin{subfigure}{.4\textwidth}
\centering
  \includegraphics[width=.8\linewidth]{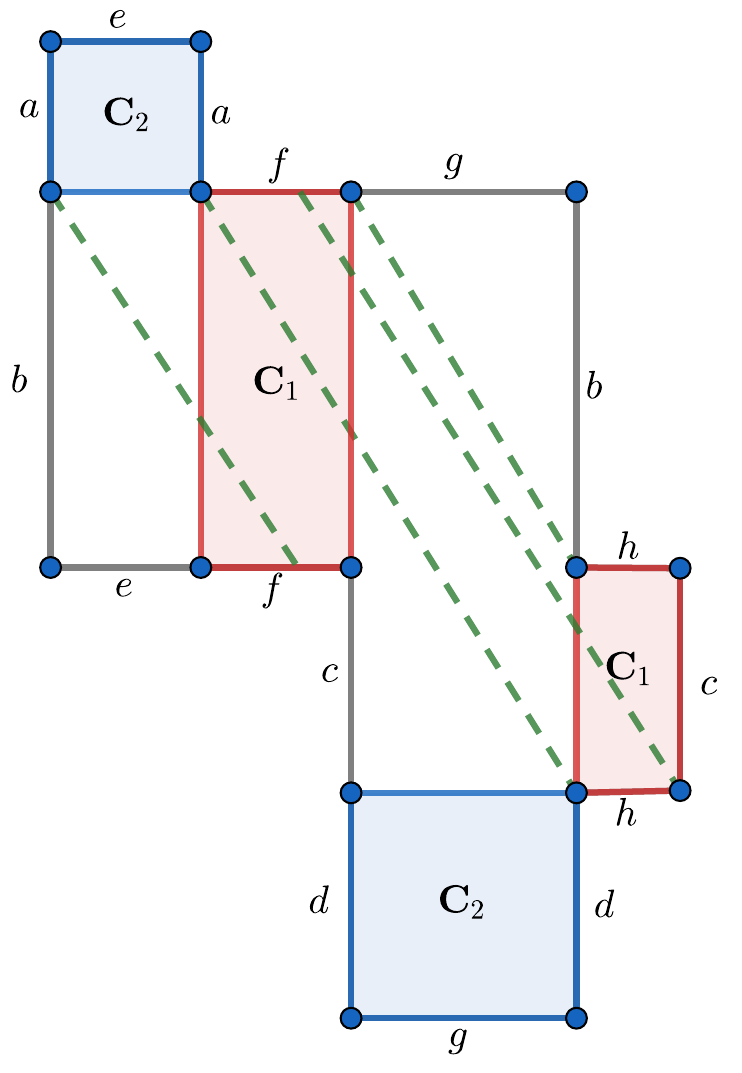}
\caption{$\cM$ is defined by the equations $d = \phi a$, $b = \phi c$, $f = \phi h$, and $g = \phi e$. The three dashed (green) saddle connections are not generically parallel, but will be parallel provided that $g = \phi f$.}
  \label{F:example4:sub1}
\end{subfigure}
\hspace{.5cm}
\begin{subfigure}{.6\textwidth}
  \centering
  \includegraphics[width=.8\linewidth]{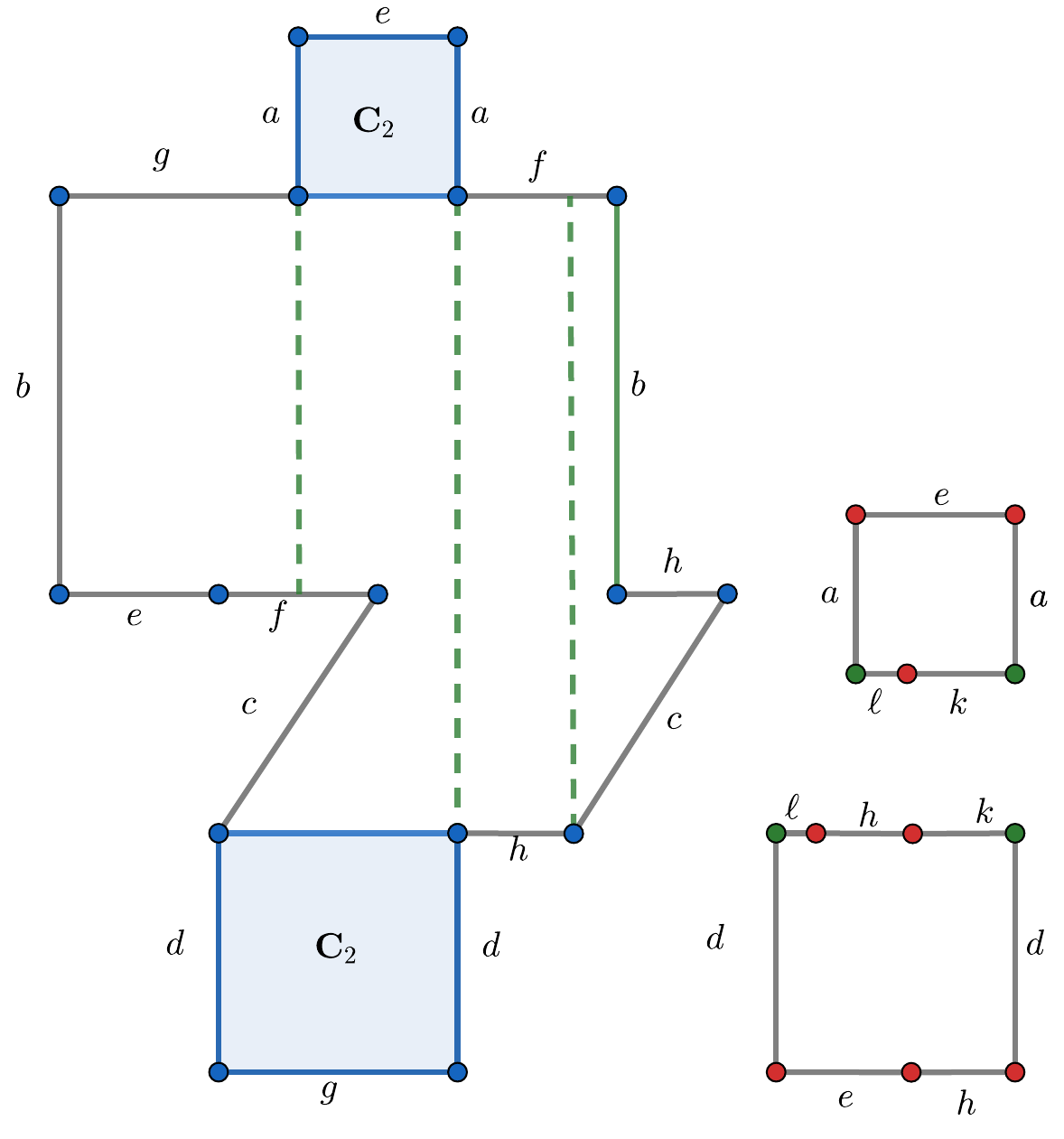}
\caption{When $g = \phi f$, the surface on the left is a shear of the previous subfigure that makes the three dashed (green) saddle connections vertical. The surface on the right, $(Y, \eta)$, is the result of collapsing the horizontal cylinders in the complement of $\bfC_2$.}  \label{F:example4:sub2}
\end{subfigure}
\caption{The surface constructed when $\Col_{\bfC_1}(\bfC_1)$ consists of a pair of vertical saddle connections and a degeneration.}
\label{F:example4}
\end{figure}
\end{proof}

\begin{proof}[Proof of Theorem \ref{T:main-tool}:] This is immediate from Lemmas \ref{L:DisconnectedRank2Rel0Case} and \ref{L:ConnectedRank2Rel0Case}.
\end{proof}

\section{Proof of Theorem \ref{T:Main-tool2}}\label{S:Rk2Rel1}

To prove Theorem \ref{T:Main-tool2}, it suffices to show that no algebraically primitive rank two rel one invariant subvariety consisting of genus four surfaces with no marked points exists. Suppose in order to a derive a contradiction that $\cM$ is such an invariant subvariety.

\begin{lemma}
If $\bfC_1$ is a generic equivalence class on a surface $(X, \omega)$ in $\cM$, then the twist space of $\bfC_1$ does not contain rel. 
\end{lemma}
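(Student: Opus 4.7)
The plan is to argue by contradiction. Suppose that $\mathrm{Twist}(\bfC_1)$ contains a rel vector and choose a typical $v \in \mathrm{Twist}(\bfC_1)$ via Apisa-Wright \cite[Lemma~11.2]{ApisaWrightHighRank}. Since $\bfC_1$ is then involved with rel, Lemma~\ref{lemma:genusbytwo} gives that $\cM_v$ is algebraically primitive of rank two and its surfaces have genus four. The dimension drop $\dim \cM_v = \dim \cM - 1$ together with rank preservation forces $\mathrm{rel}(\cM_v) = \mathrm{rel}(\cM)-1 = 0$, and since the field of definition is preserved under degenerations, $\cM_v$ also has quadratic field of definition.

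The next step is to identify $\cM_v$ with the $(1,1,1,7)$-locus. By Lemma~\ref{lemma:ConnCompsDeg} the surfaces in $\cM_v$ have at most two components: Theorem~\ref{T:main-tool} classifies the only connected possibility as the $(1,1,1,7)$-locus in $\cH(6)$, while the disconnected alternative would place $\cM_v$ inside a product of two full-rank genus-two invariant subvarieties, a configuration that is also incompatible with the uniqueness assertion of Theorem~\ref{T:main-tool}. Propagating the absence of free marked points from $\cM$ to $\cM_v$ via Apisa-Wright \cite[Lemma~11.6]{ApisaWrightHighRank} and using that the $(1,1,1,7)$-locus has no periodic points (Theorem~\ref{T:main-tool}) forces $\cM_v$ to equal the $(1,1,1,7)$-locus with no marked points at all.

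The final step, and the step I expect to be the main obstacle, is to contradict the existence of a rank-two rel-one algebraically primitive extension $\cM$ of the $(1,1,1,7)$-locus. My proposed route is to reinterpret the rel direction of $\cM$ as giving a distinguished point on the $(1,1,1,7)$-locus: since $\cM$ has no marked points and positive rel, it sits in a stratum of genus four with at least two zeros that merge under $\Col_v$, and tracking the position of one such zero across the boundary $\cM_v$ must yield either a periodic point of the $(1,1,1,7)$-locus---ruled out by Theorem~\ref{T:main-tool}---or a free marked point. In the latter case, the first paragraph of the proof of Theorem~\ref{T:main} forces $\cM$ to be the $(1,1,1,7)$-locus with a free marked point, contradicting the hypothesis that $\cM$ has no marked points. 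Making this identification of the rel direction with a distinguished point on the boundary precise is the principal technical difficulty; I expect it to require the Chen-Wright \cite{ChenWrightWYSIWYG} description of the boundary tangent space together with the rel-scalability established in Apisa-Wright \cite[Theorem~5.1]{ApisaWrightHighRank}.
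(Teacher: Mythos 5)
There is a genuine gap, and it sits exactly where you flag it: the third step is not a proof but a hope. After identifying $\cM_v$ with the $(1,1,1,7)$-locus, the paper does \emph{not} try to extract a ``distinguished point'' on boundary surfaces from the rel direction of $\cM$, and there is no mechanism that would do so: under the collapse the zeros of $(X,\omega)$ merge into the single order-six zero, $\cM_v$ is rel zero, and neither the Chen--Wright identification of the boundary tangent space nor rel-scalability associates to the rel of $\cM$ a marked or periodic point of $\cM_v$. Your fallback branch is also a non sequitur: a free marked point on $\cM_v$ would simply contradict Apisa--Wright [Lemma 11.6] (since $\cM$ has none), and says nothing about $\cM$ itself being ``the $(1,1,1,7)$-locus with a free marked point'' --- $\cM$ is rank two rel one, so it cannot be that locus with a point added. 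The paper's actual contradiction runs through the second disjoint generic equivalence class $\bfC_2$, which your proposal never uses: one first shows (using Lemmas \ref{L:DisconnectedRank2Rel0Case} and \ref{L:ConnectedRank2Rel0Case}) that $\bfC_2$ must be a pair of disjoint simple cylinders, then collapses $\bfC_2$ to land in a rank one rel two locus of connected genus two surfaces, identifies it as the golden eigenform locus with the golden point and a pair of points exchanged by the hyperelliptic involution marked, and finally rules out every way the two vanishing saddle connections of $\Col_{\bfC_2}(\bfC_2)$ could join these points and the zeros. That entire second-degeneration analysis is the content of the lemma and is absent from your sketch.

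A second, smaller error: you dismiss the disconnected possibility for $\cM_v$ as ``incompatible with the uniqueness assertion of Theorem \ref{T:main-tool},'' but that theorem concerns invariant subvarieties of strata of connected surfaces; a prime rank two rel zero subvariety sitting inside $\cH(2)\times\cH(2)$ is not excluded by it. The paper does not exclude this case --- it works around it, observing that on each component the cylinders of $\Col_w(\bfC_1)$ are simple or half-simple, so one can choose a different typical degeneration $v$ supported on those cylinders whose collapse keeps the surface connected. Your first paragraph (typical degeneration, algebraic primitivity, rank preservation, rel drop) is fine and matches the paper, but as written the proof does not go through.
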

\begin{proof}
Suppose not. Let $\bfC_2$ be a disjoint generic equivalence class (which exists, perhaps after perturbing, by Apisa-Wright \cite[Lemma 3.31]{ApisaWrightDiamonds}).

\begin{sublemma}
There is a typical degeneration $v$ in $\bfC_1$ so that $\Col_v(X, \omega)$ is connected and so $\cM_v$ is the $(1,1,1,7)$-locus.
\end{sublemma}
\begin{proof}
The second claim is by Theorem \ref{T:main-tool}, so we will concentrate on the first claim. Let $w$ be a typical degeneration in $\bfC_1$. Since $\bfC_1$ is involved with rel, $\cM_w$ is rank two rel zero. If $\cM_w$ consists of connected surfaces, then we are done. If not, then, by Lemma \ref{lemma:ConnCompsDeg}, $\cM_w$, after forgetting marked points, is prime and contained in $\cH(2) \times \cH(2)$. On each component, the cylinders in $\Col_w(\bfC_1)$ consists of one of the following: (1) one simple cylinder, (2) two simple cylinders formed by marking a Weierstrass point in a simple cylinder, or (3) two half-simple cylinders formed by marking two Weierstrass points in a simple cylinder. Let $\bfC_1'$ be the corresponding cylinders in $\bfC_1$, and let $v$ be a typical degeneration in the twist space of $\bfC_1$ so that $(\bfC_1)_v \subseteq \bfC_1'$. The explicit description of the cylinders in $\bfC_1'$ shows that no matter how many cylinders in $\bfC_1'$ we collapse, the remaining cylinders in $\bfC_1'$ are still simple or half-simple. Since collapsing a simple or half-simple cylinder preserves connectedness, $\Col_v(X, \omega)$ is connected as desired.
\end{proof}

%

\begin{sublemma}
$\bfC_2$ consists of a pair of disjoint simple cylinders.
\end{sublemma}
\begin{proof}
It suffices to show that $\Col_v(\bfC_2)$ is a pair of simple cylinders. Since $\Col_v(\bfC_1)$ and $\Col_v(\bfC_2)$ are disjoint generic equivalence classes on a surface in the $(1,1,1,7)$-locus, Lemmas \ref{L:DisconnectedRank2Rel0Case} and \ref{L:ConnectedRank2Rel0Case} imply that exactly one of them is a pair of simple cylinders. Suppose in order to derive a contradiction that $\Col_v(\bfC_1)$ is a pair of simple cylinders. Let $\bfC_1'$ be the corresponding cylinders in $\bfC_1$. Let $v'$ be a typical degeneration so that $(\bfC_1)_{v'} \subseteq \bfC_1'$. Since the collapsing cylinders are simple, the surfaces in $\cM_{v'}$ are connected and, as with $\cM_v$, $\cM_{v'}$ must be the $(1,1,1,7)$-locus. Again, $\Col_{v'}(\bfC_2)$ must be in the golden configuration and so $\Col_{v'}(\bfC_1)$ is a pair of simple cylinders. Therefore, $\bfC_1$ consists of three or four simple cylinders. Since $\cM$ has no marked points, no two of these cylinders share boundary saddle connections. Therefore, $\Col_{\Col_v(\bfC_1)} \Col_{v}(X, \omega)$ may be identified with the surface shown in Figure \ref{F:example:sub1} with $\Col_{\Col_v(\bfC_1)} \Col_{v}(\bfC_2)$ the two horizontal cylinders in the golden configuration. Since there are only two vertical saddle connections in the complement of these cylinders, we have a contradiction. 
\end{proof}





By Apisa-Wright \cite[Lemma 3.8]{ApisaWrightHighRank}, $\cM_{\bfC_2}$ is rank one and, since $\bfC_2$ is generic, necessarily rank one rel two. By Lemma \ref{lemma:genusbytwo}, $\cM_{\bfC_2}$ consists of genus two surfaces (which are connected since $\bfC_2$ consists of simple cylinders). By Apisa-Wright \cite[Lemma 10.5]{ApisaWrightHighRank}, which applies by Lemma~\ref{lemma:NonArithConnProdTwoSC}, $\cM_{\bfC_2}$ has no free marked points. We also have that $\cM_{v,\bfC_2}$ is the golden eigenform locus with the golden point marked. Therefore, $\cM_{\bfC_2}$ consists of surfaces in the golden eigenform locus with the golden point $q$ marked and a pair of marked points $\{p,p'\}$ exchanged by the hyperelliptic involution (see Apisa-Wright \cite{ApisaWrightMPts}). Since $\cM$ has no marked points, $\Col_{\bfC_2}(\bfC_2)$ consists of two saddle connections, one golden ratio times longer than the other, which must join: (1) $q$ to $p$ and $q$ to $p'$, (2) $q$ to $p$ and $z$ to $p'$ for some zero $z$, or (3) $p$ to $p'$ and $q$ to $p$. The first two cases cannot occur since, as the saddle connections have their lengths tend to zero, the points $p$ and $p'$ do not go to two points exchanged by the hyperelliptic involution. Similarly, the third case cannot occur since as $p$ moves to $q$, $p$ and $p'$ do not move toward one another.
%
%
\end{proof}


\begin{proof}[Proof of Theorem \ref{T:Main-tool2}:] Since no generic equivalence class supports rel, $\cM$ is cylinder rigid. We will derive a contradiction. Let $\bfC_1$, $\bfC_2$, and $\bfC_3$ be three disjoint generic equivalence classes of cylinders on a surface $(X, \omega) \in \cM$ (these exist by perturbing a cylindrically stable surface created in Apisa-Wright \cite[Lemma 7.10]{ApisaWrightHighRank}). Necessarily all equivalence classes are involved in rel (if only two were, then any two cylinders in those equivalence classes would be generically parallel and hence equivalent). Therefore, $\cM_{\bfC_i}$ is rank two rel zero for all $i$. By Proposition \ref{P:induction}, without loss of generality, $\cM_{\bfC_3}$ consists of connected surfaces.  Hence, $\cM_{\bfC_3}$ is the $(1,1,1,7)$-locus by Theorem \ref{T:main-tool}. By Lemmas \ref{L:DisconnectedRank2Rel0Case} and \ref{L:ConnectedRank2Rel0Case}, without loss of generality, $\Col_{\bfC_3}(\bfC_1)$ is a pair of simple cylinders and that $\Col_{\bfC_1, \bfC_3}(X, \omega)$ is as in Figure \ref{F:GoldenCylConfig}.  It follows that $(X, \omega)$ is as in Figure \ref{F:Example5}. By shearing $\bfC_2$ and collapsing, we may pass to a connected surface in $\cM_{\bfC_2}$, which is necessarily the $(1,1,1,7)$-locus and where $\Col_{\bfC_2}(\bfC_1)$ and $\Col_{\bfC_2}(\bfC_3)$ are both pairs of simple cylinders, contradicting Lemmas \ref{L:DisconnectedRank2Rel0Case} and \ref{L:ConnectedRank2Rel0Case}, which imply that one of these collections of cylinders must be in the golden configuration. 
\end{proof}

\begin{figure}[H]
 \centering
  \includegraphics[width=.75\linewidth]{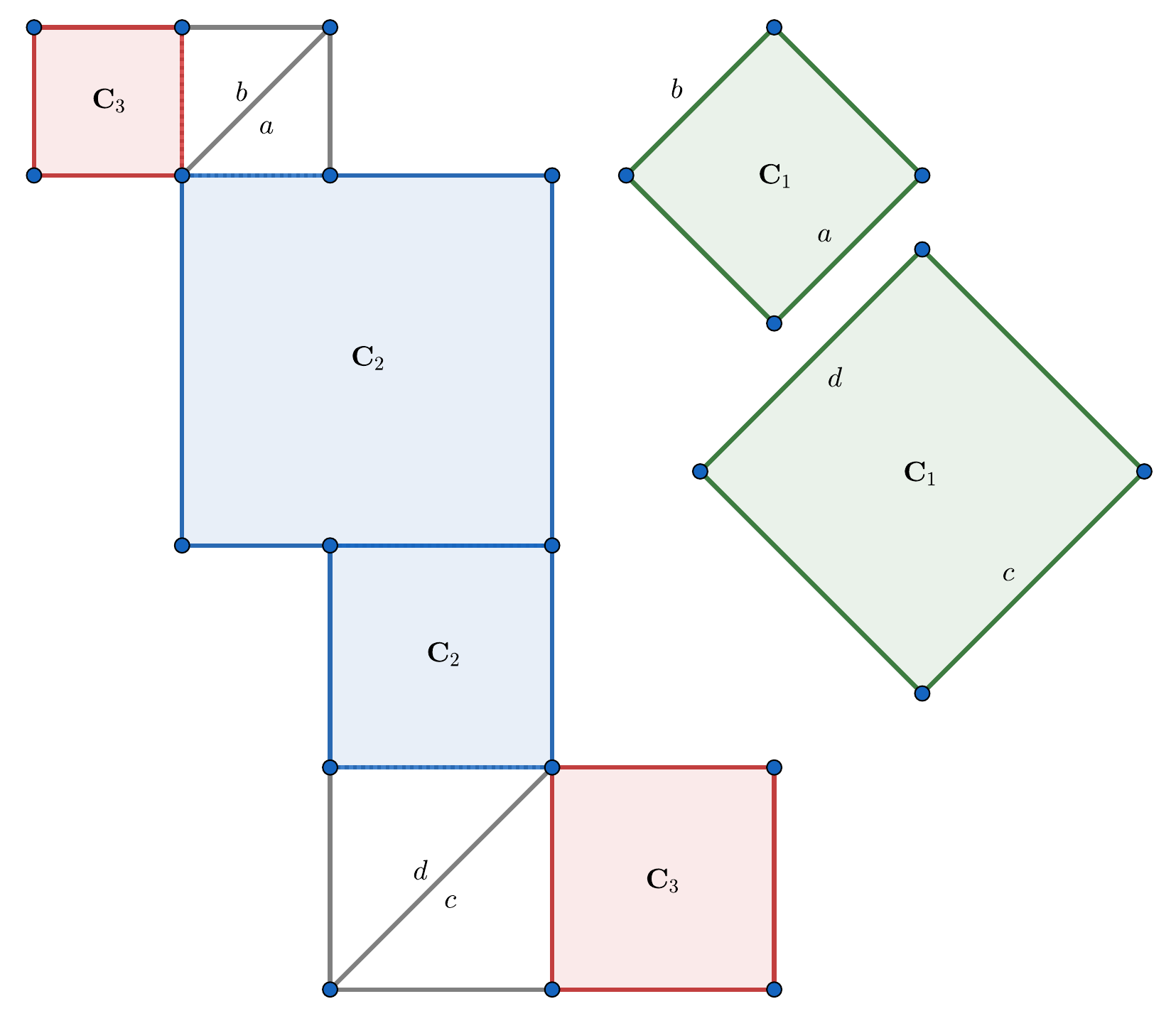}
\caption{The surface in the proof of Theorem \ref{T:Main-tool2}. Opposite sides of the polygon are identified unless otherwise specified.}
\label{F:Example5}
\end{figure}




\bibliography{fullbibliotex}{}

\providecommand{\bysame}{\leavevmode\hbox to3em{\hrulefill}\thinspace}
\providecommand{\MR}{\relax\ifhmode\unskip\space\fi MR }
\providecommand{\MRhref}[2]{%
  \href{http://www.ams.org/mathscinet-getitem?mr=#1}{#2}
}
\providecommand{\href}[2]{#2}
\begin{thebibliography}{EMMW20}

\bibitem[AEM17]{AvilaEskinMollerForniBundle}
Artur Avila, Alex Eskin, and Martin M\"oller, \emph{Symplectic and isometric
  {${\rm SL}(2,\mathbb{R})$}-invariant subbundles of the {H}odge bundle}, J.
  Reine Angew. Math. \textbf{732} (2017), 1--20. \MR{3717086}

\bibitem[Api17]{ApisaPerPtsGen2}
Paul Apisa, \emph{Periodic points in genus two: Holomorphic sections over
  {H}ilbert modular varieties, {T}eichm\"uller dynamics, and billiards},
  Preprint \textbf{arXiv:1710.05505} (2017), 1--49.

\bibitem[Api19]{Apisa-RankOneHyp}
\bysame, \emph{Rank one orbit closures in {$\mathcal{H}^{hyp}(g-1, g-1)$}},
  Geom. Funct. Anal. \textbf{29} (2019), no.~6, 1617--1637. \MR{4034916}

\bibitem[Api21]{Apisa-Lyapunov}
Paul Apisa, \emph{Invariant subvarieties of minimal homological dimension, zero
  {L}yapunov exponents, and monodromy}, arXiv:2103.02133 (2021).

\bibitem[AW]{ApisaWrightDiamonds}
Paul Apisa and Alex Wright, \emph{Reconstructing orbit closures from their
  boundaries}, preprint, arXiv:2011.08807 (2020).

\bibitem[AW21]{ApisaWrightMPts}
Paul Apisa and Alex Wright, \emph{Marked points on translation surfaces}, Geom.
  Topol. \textbf{25} (2021), no.~6, 2913--2961. \MR{4347308}

\bibitem[AW22]{ApisaWrightGeminal}
\bysame, \emph{Generalizations of the {E}ierlegende-{W}ollmilchsau}, Camb. J.
  Math. \textbf{10} (2022), no.~4, 859--933. \MR{4524830}

\bibitem[AW23]{ApisaWrightHighRank}
\bysame, \emph{High rank invariant subvarieties}, Ann. of Math. (2)
  \textbf{198} (2023), no.~2, 657--726. \MR{4635302}

\bibitem[BHM16]{BainbridgeHabeggerMollerHNFilt}
Matt Bainbridge, Philipp Habegger, and Martin M\"oller, \emph{Teichm\"uller
  curves in genus three and just likely intersections in {${\bf G}^n_m\times
  {\bf G}^n_a$}}, Publ. Math. Inst. Hautes \'Etudes Sci. \textbf{124} (2016),
  1--98. \MR{3578914}

\bibitem[BM12]{BainbridgeMoller}
Matt Bainbridge and Martin M{\"o}ller, \emph{The {D}eligne-{M}umford
  compactification of the real multiplication locus and {T}eichm\"uller curves
  in genus 3}, Acta Math. \textbf{208} (2012), no.~1, 1--92. \MR{2910796}

\bibitem[Cal04]{CaltaVeechSurfsCompPerGen2}
Kariane Calta, \emph{Veech surfaces and complete periodicity in genus two}, J.
  Amer. Math. Soc. \textbf{17} (2004), no.~4, 871--908. \MR{2083470
  (2005j:37040)}

\bibitem[CW21]{ChenWrightWYSIWYG}
Dawei Chen and Alex Wright, \emph{The {WYSIWYG} compactification}, J. Lond.
  Math. Soc. (2) \textbf{103} (2021), no.~2, 490--515. \MR{4230909}

\bibitem[EFW18]{EskinFilipWrightAlgHull}
Alex Eskin, Simion Filip, and Alex Wright, \emph{The algebraic hull of the
  {K}ontsevich-{Z}orich cocycle}, Ann. of Math. (2) \textbf{188} (2018), no.~1,
  281--313. \MR{3815463}

\bibitem[EM18]{EskinMirzakhaniInvariantMeas}
Alex Eskin and Maryam Mirzakhani, \emph{Invariant and stationary measures for
  the {${\rm SL}(2,\Bbb R)$} action on moduli space}, Publ. Math. Inst. Hautes
  \'{E}tudes Sci. \textbf{127} (2018), 95--324. \MR{3814652}

\bibitem[EMM15]{EskinMirzakhaniMohammadiOrbitClosures}
Alex Eskin, Maryam Mirzakhani, and Amir Mohammadi, \emph{Isolation,
  equidistribution, and orbit closures for the {${\rm SL}(2,\mathbb{R})$}
  action on moduli space}, Ann. of Math. (2) \textbf{182} (2015), no.~2,
  673--721. \MR{3418528}

\bibitem[EMMW20]{EskinMcMullenMukamelWright}
Alex Eskin, Curtis~T. McMullen, Ronen~E. Mukamel, and Alex Wright,
  \emph{Billiards, quadrilaterals and moduli spaces}, J. Amer. Math. Soc.
  \textbf{33} (2020), no.~4, 1039--1086. \MR{4155219}

\bibitem[Fil16]{Filip2}
Simion Filip, \emph{Splitting mixed {H}odge structures over affine invariant
  manifolds}, Ann. of Math. (2) \textbf{183} (2016), no.~2, 681--713.
  \MR{3450485}

\bibitem[Fil22]{Filip-Survy}
Simion Filip, \emph{Translation surfaces: Dynamics and {H}odge theory},
  Preprint \textbf{\url{math.uchicago.edu/~sfilip/public_files/surf_surv.pdf}}
  (2022).

\bibitem[FL23]{Freedman-Lucas}
Sam Freedman and Trent Lucas, \emph{Veech fibrations}, Preprint
  \textbf{arXiv:2310.02325} (2023).

\bibitem[KM17]{KumarMukamel}
Abhinav Kumar and Ronen~E. Mukamel, \emph{Algebraic models and arithmetic
  geometry of {T}eichm\"{u}ller curves in genus two}, Int. Math. Res. Not. IMRN
  (2017), no.~22, 6894--6942. \MR{3737325}

\bibitem[KS00]{KenyonSmillie}
Richard Kenyon and John Smillie, \emph{Billiards on rational-angled triangles},
  Comment. Math. Helv. \textbf{75} (2000), no.~1, 65--108. \MR{1760496}

\bibitem[Lei04]{Leininger-Sporadic}
Christopher~J. Leininger, \emph{On groups generated by two positive
  multi-twists: {T}eichm\"{u}ller curves and {L}ehmer's number}, Geom. Topol.
  \textbf{8} (2004), 1301--1359. \MR{2119298}

\bibitem[LNW17]{LanneauNguyenWrightFinInNonArithRkOne}
Erwan Lanneau, Duc-Manh Nguyen, and Alex Wright, \emph{Finiteness of
  {T}eichm\"{u}ller curves in non-arithmetic rank 1 orbit closures}, Amer. J.
  Math. \textbf{139} (2017), no.~6, 1449--1463. \MR{3730926}

\bibitem[M\"06]{Moller-PeriodicPoints}
Martin M\"{o}ller, \emph{Periodic points on {V}eech surfaces and the
  {M}ordell-{W}eil group over a {T}eichm\"{u}ller curve}, Invent. Math.
  \textbf{165} (2006), no.~3, 633--649. \MR{2242629}

\bibitem[M\"08]{Moller-TeichCurvesInHyp}
\bysame, \emph{Finiteness results for {T}eichm\"{u}ller curves}, Ann. Inst.
  Fourier (Grenoble) \textbf{58} (2008), no.~1, 63--83. \MR{2401216}

\bibitem[McM03]{McMullenBilliardsAndTeichCurves}
Curtis~T. McMullen, \emph{Billiards and {T}eichm\"{u}ller curves on {H}ilbert
  modular surfaces}, J. Amer. Math. Soc. \textbf{16} (2003), no.~4, 857--885.
  \MR{1992827}

\bibitem[McM05]{McMullenDiscriminantSpin}
\bysame, \emph{Teichm\"{u}ller curves in genus two: discriminant and spin},
  Math. Ann. \textbf{333} (2005), no.~1, 87--130. \MR{2169830}

\bibitem[McM06]{McMullenDecagonUnique}
\bysame, \emph{Teichm\"uller curves in genus two: torsion divisors and ratios
  of sines}, Invent. Math. \textbf{165} (2006), no.~3, 651--672. \MR{2242630
  (2007f:14023)}

\bibitem[McM07]{McMullenGenus2}
\bysame, \emph{Dynamics of {${\rm SL}\sb 2(\mathbb{R})$} over moduli space in
  genus two}, Ann. of Math. (2) \textbf{165} (2007), no.~2, 397--456.
  \MR{MR2299738 (2008k:32035)}

\bibitem[McM23]{McMullen-BilliardsTeichCurveSurvey}
\bysame, \emph{Billiards and {T}eichm\"{u}ller curves}, Bull. Amer. Math. Soc.
  (N.S.) \textbf{60} (2023), no.~2, 195--250. \MR{4557380}

\bibitem[MW15]{MatheusWrightHodgeTeichPlanes}
Carlos Matheus and Alex Wright, \emph{Hodge-{T}eichm\"uller planes and
  finiteness results for {T}eichm\"uller curves}, Duke Math. J. \textbf{164}
  (2015), no.~6, 1041--1077. \MR{3336840}

\bibitem[MW17]{MirzakhaniWrightBoundary}
Maryam Mirzakhani and Alex Wright, \emph{The boundary of an affine invariant
  submanifold}, Invent. Math. \textbf{209} (2017), no.~3, 927--984.
  \MR{3681397}

\bibitem[MW18]{MirzakhaniWrightFullRank}
\bysame, \emph{Full-rank affine invariant submanifolds}, Duke Math. J.
  \textbf{167} (2018), no.~1, 1--40. \MR{3743698}

\bibitem[Vee89]{VeechTeichCurvEisen}
W.~A. Veech, \emph{Teichm\"uller curves in moduli space, {E}isenstein series
  and an application to triangular billiards}, Invent. Math. \textbf{97}
  (1989), no.~3, 553--583. \MR{1005006 (91h:58083a)}

\bibitem[Vor96]{Vorobets-Sporadic}
Ya.~B. Vorobets, \emph{Plane structures and billiards in rational polygons: the
  {V}eech alternative}, Uspekhi Mat. Nauk \textbf{51} (1996), no.~5(311),
  3--42. \MR{1436653}

\bibitem[Win22]{Winsor-H22}
Karl Winsor, \emph{Uniqueness of the {V}eech 14-gon}, Preprint
  \textbf{arXiv:2210.09702} (2022).

\bibitem[Wri13]{WrightSchwarzVWBMCurves}
Alex Wright, \emph{Schwarz triangle mappings and {T}eichm\"uller curves: the
  {V}eech-{W}ard-{B}ouw-{M}\"oller curves}, Geom. Funct. Anal. \textbf{23}
  (2013), no.~2, 776--809. \MR{3053761}

\bibitem[Wri14]{WrightFieldofDef}
\bysame, \emph{The field of definition of affine invariant submanifolds of the
  moduli space of abelian differentials}, Geom. Topol. \textbf{18} (2014),
  no.~3, 1323--1341. \MR{3254934}

\bibitem[Wri15]{WrightCylDef}
\bysame, \emph{Cylinder deformations in orbit closures of translation
  surfaces}, Geom. Topol. \textbf{19} (2015), no.~1, 413--438. \MR{3318755}

\bibitem[Ygo23]{Ygouf-RankOneGenusThree}
Florent Ygouf, \emph{Nonarithmetic affine invariant orbifolds in
  {$\mathcal{H}^{odd}(2,2)$} and {$\mathcal{H}(3,1)$}}, Geom. Dedicata
  \textbf{217} (2023), no.~4, Paper No. 72, 24. \MR{4601994}

\end{thebibliography}
\end{document}